\newtheorem{theorem}{Theorem}[section]
\newtheorem{lemma}[theorem]{Lemma}
\newtheorem{proposition}[theorem]{Proposition}
\theoremstyle{definition}
\newtheorem{definition}[theorem]{Definition}
\newtheorem{example}[theorem]{Example}
\newtheorem{corollary}[theorem]{Corollary}
\theoremstyle{remark}
\newtheorem{remark}[theorem]{Remark}
\numberwithin{equation}{section}
\begin{document}

\title{On the generic local Langlands correspondence for GSpin groups}

\author{Volker Heiermann}
\address{Aix Marseille Universit\'e, CNRS, Centrale Marseille, I2M, UMR 7373, 13453 Marseille, France}

\email{volker.heiermann@univ-amu.fr}

\author{Yeansu Kim}
\address{Department of Mathematics, University of Iowa, 14 MacLean Hall, Iowa city 52242, USA, Max Planck Institute for Mathematics, Vivatsgasse 7, 53111 Bonn, Germany}
\email{yeansu-kim@uiowa.edu, yskim@mpim-bonn.mpg.de}


\date{Dec 23, 2013}


\thanks{The first author has benefitted from help of the Agence
Nationale de la Recherche with reference ANR-08-BLAN-0259-02.}

\keywords{The generic Arthur packet conjecture, Local Langlands correspondence, Langlands-Shahidi method}

\begin{abstract}
In the case of split $GSpin$ groups, we prove an equality of $L$-functions between automorphic local $L$-functions defined by the Langlands-Shahidi method and local Artin $L$-functions. Our method of proof is based on previous results of the first author which allow to reduce the problem to supercuspidal representations of Levi subgroups of $GSpin$, by constructing Langlands parameters for general generic irreducible admissible representations of $GSpin $ from the one for generic irreducible supercuspidal representations of its Levi subgroups.
\end{abstract}

\maketitle

\section{Introduction}
\label{Intro}

The main purpose of this paper is to show that the automorphic local $L$-functions for split $GSpin$ groups defined by the Langlands-Shahidi method are equal to Artin $L$-functions.  Our proof goes through the local Langlands correspondence using a method established by the first author (\cite{He1, He2}). Briefly, the local Langlands correspondence asserts that there exists a `natural' bijection between two different sets: Arithmetic objects (Galois or Weil-Deligne group) and analytic (automorphic) objects. The local Langlands correspondence is defined through the equality of Artin $L$-functions on the Galois side with automorphic $L$-functions.  For example, in the case of $GL_n$ (\cite{HT, H1}), the local Langlands correspondence is formulated by the equality of Artin $L$-functions and Rankin-Selberg $L$-functions for $GL_n \times GL_m$ (\cite{JPS, S8}). On the automorphic  side, Shahidi defined $L$-functions in the generic case using Eisenstein series. Rankin-Selberg $L$-functions are one of them (\cite{S8}).

In our paper, we study the Rankin product $L$-functions for $GL_m \times GSpin_{k}$ ($L$-functions from the Langlands-Shahidi method for $GSpin$ groups) using functoriality from $GSpin$ groups to general linear groups (\cite{AS1, AS2} and Section \ref{LP}). To explain our results more precisely, let $F$ denote a non-archimedean local field of characteristic zero. Let $\textbf{G}_n$ (resp. $\textbf{GL}_m$) denote the split general spin group of semisimple rank n, i.e. $\textbf{GSpin}_{2n+1}$ or $\textbf{GSpin}_{2n}$, (resp. general linear group of semisimple rank $m$) over $F$ and denote by $G_n$ (resp. $GL_m$) the group of $F$-points of $\textbf{G}_n$ (resp. $\textbf{GL}_m$). Let $\sigma \otimes \pi$ be an irreducible admissible generic representation of $M=\textbf{M}(F)$, where $\textbf{M}= \textbf{GL}_m \times \textbf{G}_n$ is the Levi subgroup of a standard parabolic subgroup $\textbf{P}$ in $\textbf{G}_{m+n}$. There is a given list of $L$-functions attached to $\sigma \otimes \pi$ defined by Shahidi. In the $GSpin$ case, we have two $L$-functions. The first $L$-function, denoted $L(s, \sigma \times \pi)$ (\cite{AS1}), is the Rankin product $L$-function for $GL_m \times G_n$. The second $L$-function  is either the twisted symmetric square $L$-function or the twisted exterior square $L$-function (Section \ref{Shahidi} for more details).

Note that Henniart \cite{H2} has recently proved that the twisted symmetric square and the twisted exterior square $L$-functions (the second $L$-functions) are Artin $L$-functions. In this paper, we prove the analogue result for the Rankin product $L$-functions for $GL_m \times G_n$ (the first $L$-functions). More precisely, we prove the following result (Theorem \ref{main}):

$ $

\noindent $\textbf{Theorem A.}$ The $L$-functions from the Langlands-Shahidi method in the case of $GSpin$ groups are Artin $L$-functions, i.e., $L(s, \sigma \times \pi)$ is an Artin $L$-function.

$ $

Our main theorem follows from results of the first author and the special case when $\pi$ is a supercuspidal representation. More precisely, in  \cite{He1} the first author constructed the Langlands parameters, the objects on the arithmetic side of the local Langlands correspondence, which correspond to admissible representations of a connected reductive group over $F$ after assuming the existence of the Langlands parameters that correspond to supercuspidal representations of its Levi subgroups. The equality between $L$-functions from the Langlands-Shahidi method and Artin $L$-functions holds, if this is true in the supercuspidal case \cite{He2}.

Note that in \cite{He1}, first the Langlands parameters of discrete series representations of connected reductive groups over $F$ are constructed from the Langlands parameters of supercuspidal representations of Levi subgroups. The proof is based on the classification of the supercuspidal support of discrete series representations in terms of poles of Harish-Chandra's $\mu $-function given in \cite{He3}. All this is subject to some additional assumptions on the Langlands parameters of supercuspidal representations. However, these assumptions are satisfied for a generic discrete series representation if the $L$-functions of a representation in its supercuspidal support agree with the corresponding Artin $L$-functions of its Langlands parameter. One has also that the $\gamma $-factors of a generic discrete series representation agree with the $\gamma $-factors of its Langlands parameter, if the  equality of $\gamma $-factors is true for a representation in its supercuspidal support.

The Langlands parameters of arbitrary admissible irreducible representations are deduced from this with help of the Langlands classification and equality of $L$-functions and $\gamma $-factors holds under the analog conditions.

Therefore, by the results in \cite{He1, He2}, it is possible to reduce Theorem A to the existence of Langlands parameters for generic supercuspidal representations with equality of $L$-functions (i.e. Theorem A in the case of supercuspidal representations). More precisely, let $\pi_{sc}$ be an irreducible generic supercuspidal representation of ${G}_n$ and let $\Pi_{sc}$ be an irreducible admissible representation of $GL_{2n}$ which is the local functorial lift of $\pi_{sc}$ constructed in \cite{AS1}. We prove (Theorem \ref{F:local2})

$ $

\noindent $\textbf{Theorem A'}$ For any irreducible generic supercuspidal representation $\sigma_{sc}$ of $GL_m$ we have
$$L(s, \sigma_{sc} \times \pi_{sc}) = L(s, \sigma_{sc} \times \Pi_{sc}).$$

$ $

The $L$-functions on the right hand side, i.e. $L(s, \sigma_{sc} \times \Pi_{sc})$, are the Rankin-Selberg $L$-functions. The Rankin-Selberg $L$-functions are Artin $L$-functions due to the local Langlands correspondence for $GL_n$ \cite{HT, H1}. Therefore, Theorem A' implies  that the Rankin product $L$-functions for $GL_m \times G_n$ in the generic supercuspidal case are Artin $L$-functions.

Let us briefly explain the proof of Theorem A' here. Due to \cite[Proposition 5.1]{S2}, an irreducible generic supercuspidal representation $\pi_{sc}$ can be embedded into a globally generic cuspidal representation. Therefore, we can use the global functional equation \cite{S2} to get the equality of $\gamma$-factors. Since $L$-functions from the Langlands-Shahidi method are completely determined by corresponding $\gamma$-factors in the tempered case (Definition \ref{Shahidi's L-function}), it is enough to show that the functorial lift $\Pi_{sc}$ of $\pi_{sc}$ is a tempered representation. (Theorem \ref{F:local2})

Furthermore, we describe the local transfer image of the functorial lift of a generic supercuspidal representation $\pi_{sc}$ of $G_n$ (Theorem \ref{F:main}) to show that the Langlands parameter of the local functorial lift $\Pi_{sc}$ of $\pi_{sc}$ factors through the $L$-group of $\textbf{G}_n$ (Theorem \ref{F:Langlands parameter}). This Langlands parameter can then be considered as the Langlands parameter of the generic supercuspidal representation $\pi_{sc}$ of $G_n$.

$ $

\noindent $\textbf{Remark}$ We also prove the equality of $\gamma$-factors in Theorem A and Theorem A'. More precisely, let $\psi_F$ be
a fixed non trivial additive character of $F$ and $\gamma(s, \sigma \times \pi , \psi_F)$ the complex function defined in \cite[Theorem 3.5]{S2}, where $\pi$ and $\sigma$ are as in Theorem A. Then, the $\gamma(s, \sigma \times \pi, \psi_F)$'s are Artin $\gamma$-factors. Furthermore, let $\pi_{sc}$, $\sigma_{sc}$ and $\Pi_{sc}$ be as in Theorem A'. Then, $\gamma(s, \sigma_{sc} \times \pi_{sc}, \psi_F) = \gamma(s, \sigma_{sc} \times \Pi_{sc}, \psi_F)$.

$ $

Our main result (Theorem A) has an interesting application to the structure of $L$-packets, the partition of the set of (equivalence classes of) irreducible admissible representations  of a quasi-split reductive group $\textbf{H}$ over a non-archimedean local field $F$ given by the (in general conjectural) Langlands correspondence. More precisely, denote by $W_F'$ the Weil-Deligne group, i.e. $W_F'=W_F \times SL(2,\mathbb{C})$,where $W_F$ is the Weil group. Let $\psi$ be an Arthur parameter for $\textbf{H}$, i.e. a map $W_F' \times SL(2, \mathbb{C}) \rightarrow \ ^L\textbf{H}$ that satisfies certain properties (See \cite{S4} for more details). There is a Langlands parameter $\phi_{\psi}$ which corresponds to the Arthur parameter $\psi$ which is defined by $\phi_{\psi}(w)=\psi(w,\begin{pmatrix} |w|^{\frac{1}{2}} & 0 \\ 0 & |w|^{-\frac{1}{2}} \end{pmatrix})$. We consider the $L$-packet which is attached to this Langlands parameter $\phi_{\psi}$. The following conjecture is called the generic Arthur packet conjecture (\cite{S4}):

${ }$

\noindent $\textbf{Conjecture.}$ Let $\Pi(\phi_{\psi})$ be the $L$-packet attached to the Langlands parameter $\phi_{\psi}$ which corresponds to the Arthur parameter $\psi$ of $\textbf{H}$. Suppose that $\Pi(\phi_{\psi})$ has a generic member. Then it is a tempered $L$-packet.

${ }$

\noindent $\textbf{Remark}$ This conjecture is first formulated in \cite{S4} for any connected reductive group and strengthened in \cite{Kim} for classical groups and $GSpin$ groups. This conjecture can be considered as a local version of the generalized Ramanujan conjecture and the conjecture itself is related to the generalized Ramanujan conjecture (\cite{Ar}).

${ }$

Shahidi \cite[Theorem 5.1]{S4} proved that, if the equality of $L$-functions (Theorem A in the $GSpin$ case) through the local Langlands correspondence holds, then the generic Arthur packet conjecture is also true. Therefore, our main theorem implies the following:

${ }$

\noindent $\textbf{Theorem B.}$ The generic Arthur packet conjecture is true for split $GSpin$ groups.
${ }$

\noindent $\textbf{Remark}$ The generic Arthur packet conjecture is also true in general when $F$ is archimedean local field since the full local Langlands conjecture for real groups is a theorem in \cite{L}. The equality of $L$-functions in this case is proved by Shahidi in \cite{S5}.\\

The paper is organized as follows. In Section \ref{N}, we recall the standard notations.

In Section \ref{Pre}, we briefly explain $L$-functions from the Langlands-Shahidi method in the case of $GSpin$ groups (see \cite{S0, S6, S5, S1, S2} for Langlands-Shahidi method for any connected reductive groups). We first introduce $\gamma$-factors and explain how $L$-functions are defined in the tempered case. Then, $L$-functions are defined using Langlands classification in the arbitrary case (Definition \ref{Shahidi's L-function}). In this section, we also explain the multiplicativity of $\gamma$-factors and the multiplicativity of $L$-functions in the general setting. The multiplicativity property is one of the essential tools in our paper. We give several examples of the multiplicativities of local factors in the case of $GSpin$ groups which support the proofs of theorems in Section \ref{LP} (Examples \ref{Multi:GSpin} and \ref{Multi:GL}). We also explain the global functorial lift from $GSpin$ groups to general linear groups obtained by Asgari and Shahidi \cite{AS1, AS2}.

In Section \ref{LP}, we construct a local Langlands parameter that corresponds to any irreducible admissible generic representations of $GSpin$ groups with the equality of $L$-functions through the local Langlands correspondence. More precisely, in subsection \ref{sc:L}, we first show the equality of $L$-functions in the case of supercuspidal generic representations of $GSpin$ groups (Theorem \ref{F:local2}). In subsection \ref{sc:LP}, we use the equality of $L$-functions to describe the image of the local functorial lift of supercuspidal generic representations of $GSpin$ groups (Theorem \ref{F:main}). Then, we construct a local Langlands parameter that corresponds to any irreducible admissible representations of $GSpin$ groups (Theorem \ref{F:Langlands parameter}). Note that $L$-function condition in Theorem \ref{F:main} is essential in the proof of Theorem \ref{F:Langlands parameter}. In subsection \ref{H}, we generalize the result on the construction of local Langlands parameters in the supercuspidal case to the case of irreducible admissible generic representations. Here, we apply Heiermann's construction of local Langlands parameters \cite{He3, He1, He2} to our case (Theorem \ref{F:reduction}). We then show the equality of $L$-functions through the local Langlands correspondence (Theorem \ref{main}), which is the main result of this paper and implies the generic Arthur packet conjecture for split $GSpin$ groups as explained above.

In Section \ref{Erratum}, we add an erratum to \cite{He1}.

\section{Notation}
\label{N}
Let $F$ denote a non-archimedean local field of characteristic zero, either archimedean or non-archimedean, $k$ a number field (global field of characteristic zero) and $\mathbb{A}=\mathbb{A}_k$ its ring of adeles. If needed, we let $k_{v_0}=F$ for a place $v_0$ of $k$. Let $\textbf{G}_n$ (resp. $\textbf{GL}_m$) denote the split general spin group of semisimple rank n, i.e. $\textbf{GSpin}_{2n+1}$ or $\textbf{GSpin}_{2n}$, (resp. general linear group of semisimple rank $m$) over $F$ or over $k$ and write $^L\textbf{G}_n$ for the $L$-group of $\textbf{G}_n$. In the case of split $GSpin$ groups, $^L\textbf{G}_n$ is either $\textbf{GSp}_{2n}(\mathbb{C})$ or $\textbf{GSO}_{2n}(\mathbb{C})$.
Let $\textbf{M}$ be the Levi subgroup of a standard parabolic subgroup $\textbf{P}=\textbf{M}\textbf{N}$ in $\textbf{G}_n$. It corresponds to an ordered partition $s=(n_1, n_2, \ldots, n_k)$ of some $n'$ with $n' \leq n$ and $n-n' \neq 1$.  (The Levi subgroup $\textbf{M}_s$ associated to the partition $s$ is isomorphic to
$$\textbf{GL}_{n_1} \times \textbf{GL}_{n_2} \times \cdots \times \textbf{GL}_{n_k} \times \textbf{G}_{n-n'} \ (\text{see \cite{A}} ).$$

For any connected reductive group $\textbf{G}$ defined over $F$ or $k$, we denote by $G$ (resp. $\underline{G}$) its group of  $F$-rational points (resp. $\mathbb{A}$-rational points). To differentiate global representations from local representations, we use $\underline{ \ \ }$ to denote global representations. For example, $\underline{\pi}$ in Sec \ref{F:global} will denote an automorphic representation of $\underline{G}_n$.

We denote the normalized induced representation $i_{P_s}^{G_n}(\rho_1 \otimes \cdots \otimes \rho_k \otimes \pi)$ by
$$\rho_1 \times \cdots \times \rho_k \rtimes \pi$$
\noindent where $P_s=M_sN$ is the standard $F$-parabolic subgroup of $G_n$ which corresponds to the partition $s$ and each $\rho_i$ (resp. $\pi$) is a representation of some $GL_{n_i}$ (resp. $G_{n-n'}$). In particular, $i_P^G$ is a functor from admissible representations of $M$ to admissible representations of $G$ that sends unitary representations to unitary representations. We also denote the normalized induced representation $i_{P}^{GL_n}(\rho_1 \otimes \cdots \otimes \rho_k)$ by
$$\rho_1 \times \cdots \times \rho_k$$
\noindent where $P=MN$ is the standard $F$-parabolic subgroup of $GL_n$ where $\textbf{M} \cong \textbf{GL}_{n_1} \times \textbf{GL}_{n_2} \times \cdots \times \textbf{GL}_{n_k}$ and each $\rho_i$ is a representation of some $GL_{n_i}$.

For irreducible admissible generic representations $\sigma$ and $\sigma'$ of $GL_n$ and $GL_{n'}$, respectively and for an irreducible admissible generic representation $\pi$ of $G_m$, the symbol 
\begin{equation}\label{Rankin product L-function}
L(s, \sigma \times \pi) \ \  ({\rm resp.} \ L(s, \sigma \times \sigma'))
\end{equation}
will denote the local Rankin product $L$-function for $GL_m \times G_n$ (resp. the local Rankin-Selberg $L$-function) and the corresponding $\gamma $-factor will be denoted 
\begin{equation}\label{Rankin product gamma-factor}
\gamma(s, \sigma \times \pi, \psi_{F}) \ \ ({\rm resp.} \ \gamma(s, \sigma \times \sigma', \psi_{F})).
\end{equation}

\section{Preliminaries}
\label{Pre}

\subsection{$L$-functions from the Langlands-Shahidi method}
\label{Shahidi}
$ $

Let us briefly explain how Shahidi defines local $L$-functions in the case of $GSpin$ groups.
Let $\textbf{M} \cong \textbf{GL}_m \times \textbf{G}_n$ be the Levi subgroup of a maximal standard parabolic subgroup $\textbf{P}=\textbf{M}\textbf{N}$ of $\textbf{G}_{m+n}$ and let $\sigma \otimes \pi$ be an irreducible admissible generic representation of $M=\textbf{M}(F)=GL_m\times G_n$. Denote by $\rho_m$ the standard representation of $GL_m(\mathbb{C})$, by $\widetilde{R}$ the contragredient of the standard representations of $\,^{L}\textbf{G}_n$ and by $\mu$ the similitude character of $\,^{L}\textbf{G}_n$. The adjoint action $r$ of $\,^{L}\textbf{M}$, the $L$-group of $M$, on $\,^{L}\mathfrak{n}$, the Lie algebra of the $L$-group of $N$ decomposes as $r=r_2 \ \text{or} \ r_1 \oplus r_2$ (\cite[Proposition 5.6]{A} or \cite[chapter 3]{AS1}),where
\noindent \[ \left\{ \begin{array}{cc}
 r_1= \rho_m \otimes \widetilde{R} \ \ \ \text{and} \ \ \ r_2=Sym^2 \otimes \mu^{-1} &  \text{if} \ \ \ \textbf{G}_n = \textbf{GSpin}_{2n+1} \\
  r_1= \rho_m \otimes \widetilde{R} \ \ \ \text{and} \ \ \ r_2=\wedge^2 \otimes \mu^{-1} &  \text{if} \ \ \ \textbf{G}_n = \textbf{GSpin}_{2n}
\end{array} \right. \]
Shahidi first defined two complex functions $\gamma(s, \sigma \otimes \pi, r_i, \psi_{F})$, called $\gamma $-functions, attached to $\sigma \otimes \pi$ and $r_i$ for $i=1,2$, where $\psi_F$ is a fixed non trivial additive character of $F$ (\cite[Theorem 3.5]{S2}). Using $\gamma$-factors, Shahidi defined corresponding $L$-functions attached to $\sigma \otimes \pi$ and $r_i$, denoted $L(s, \sigma \otimes \pi, r_i)$ for $i=1,2$, in \cite{S2} by the following way:

\begin{definition}\label{Shahidi's L-function}
If $\sigma$ is tempered, define $L$-functions from the Langlands-Shahidi method as inverse of the normalized numerator of the $\gamma$-factors. One defines then $L$-functions for an arbitrary irreducible admissible generic representation using Langlands classification (\cite{Si1}). The first $L$-function, i.e. $L(s, \sigma \otimes \pi, r_1)$, is the Rankin product $L$-function for $GL_m \times G_n$ and will consequently be denoted $L(s, \sigma \times \widetilde{\pi})$ in the sequel. We will also write $\gamma(s, \sigma \times \pi, \psi_F)$ for $\gamma(s, \sigma \otimes \tilde{\pi}, r_1, \psi_{F})$. The second $L$-function, i.e. $L(s, \sigma \otimes \pi, r_2)$, is either the twisted symmetric square $L$-function or the twisted exterior square $L$-function. 
\end{definition}

\begin{remark}
In the tempered case, the equality of $\gamma$-factors implies the equality of $L$-functions since the $L$-functions are completely determined by $\gamma$-factors in the tempered case.
\end{remark}

There are several equivalent definitions of the notion of a tempered representation. Let us introduce one of them that we use throughout the paper.

\begin{definition}
Let $\textbf{G}$ be a connected reductive group over $F$ and put $G=\textbf{G}(F)$. An irreducible admissible representation $\pi$ of $G$ is called tempered if there exists an irreducible square-integrable, i.e. a discrete series representation, $\sigma$ of a Levi subgroup $M$ of $G$ such that $\pi$ can be embedded into $i_P^G\sigma $.
\end{definition}

We introduce one important property below which is the multiplicativity of $\gamma$-factors. Let us first explain the general setting. We fix a Borel subgroup $\bf B = \bf TU$ of a quasi-split connected reductive group $\bf G$ over $F$, where $\bf T$ is a maximal torus and $\bf U$ denotes the unipotent radical of $\bf B$. Let ${\bf A}_0 \subset \bf T$ be the maximal split subtorus of $\bf T$ (which is one of $\bf G$) and let $\Phi$ be the set of $F$-roots of ${\bf A}_0$. Then, we can write $\Phi = \Phi^+ \cup \Phi^-$, where $\Phi^+$ is the set of positive roots with respect to $\bf U$. We also let $\Delta \subset \Phi^+$ be the set of simple roots. Let $\bf P = \bf MN$ be a standard parabolic subgroup of $\bf G$, i.e. $\bf P$ is a parabolic subgroup that contains $\bf B$, $\bf N$ is its unipotent radical and $\bf M$ the unique Levi subgroup that contains $\bf T$. If $\theta$ is the subset of $\Delta$ that generates $\bf M$, then we denote such $\bf P$ (resp. $\bf M$) by ${\bf P}_{\theta}$ (resp. ${\bf M}_{\theta}$). It is well known that this establishes a bijection between subsets of $\Delta $ and standard parabolic subgroups $\bf P$ of $\bf G$.

Now, we are ready to explain the multiplicativity of $\gamma$-factors in the general setting. Assume $\sigma $ is an irreducible admissible generic representation of $M$, $\sigma \subset i_{P'}^M \sigma'$, where $\textbf{P}'=\textbf{M}' \textbf{N}'$ is a standard parabolic subgroup of $\textbf{M}$ with respect to $\textbf{M}\cap\textbf{B}$ and $\sigma'$ is an irreducible admissible generic representation of $M'$. Let $\theta_1 \subset \Delta$ be such that $\bf M' = \bf M_{\theta_1}$. Let $\widetilde{\omega}:=\widetilde{\omega}_{l,\Delta}\widetilde{\omega}_{l,\theta_1}$, where $\widetilde{\omega}_{l,\Delta}$ (resp. $\widetilde{\omega}_{l,\theta_1}$) be the longest element in the Weyl group of $\textbf{A}_0$ in $\textbf{G}$ (resp. $\textbf{M}$). Lemma 4.2.1 \cite{S7} implies that there exists a set of subsets $\theta_2, \theta_3, \cdots, \theta_n \subset \Delta$ such that

\begin{itemize}
\item $\theta_n = \widetilde{\omega}(\theta_1)$;
\item fix $1 \leq j \leq n-1$; then there exists $\alpha_j \in \Delta - \theta_j$ such that $\theta_{j+1}=\widetilde{\omega}_{l,\Omega_j}\widetilde{\omega}_{l,\theta_j}(\theta_j)$, where $\Omega_j = \theta \cup \{ \alpha_j \}$ and $\widetilde{\omega}_{l,\Omega_j}$ (resp. $\widetilde{\omega}_{l,\theta_j}(\theta_j)$) are the longest elements in the Weyl group of ${\bf A}_0$ in ${\bf M}_{\Omega_j}$ (resp. ${\bf M}_{\theta_j}$);
\item set $\widetilde{\omega}_{j}=\widetilde{\omega}_{l,\Omega_j}\widetilde{\omega}_{l,\theta_j}$ for $j=1, \cdots, n-1$, then $\widetilde{\omega}= \widetilde{\omega}_1 \cdots \widetilde{\omega}_{n-1}$.
\end{itemize}

For each $j$, $2 \leq j \leq n-1$, let $\overline{\omega}_j=\widetilde{\omega}_{j-1} \cdots \widetilde{\omega}_{1}$. The group $\textbf{M}_{\Omega_i}$ contains $\textbf{M}_{\theta_j}$ as a Levi factor of a maximal parabolic subgroup and $\overline{\omega}_j(\sigma')$ is a representation of $M_{\theta_j}$.

Fix now an irreducible component $r_i$ of the action of $\,^{L}\textbf{M}$ on the Lie algebra of $\,^{L}\textbf{N}$ and denote $V_i$ its space. The group $\,^{L}\textbf{M}'$ acts on $V_i$ as defined by $\,^{L}\textbf{M}$. Given an irreducible constituent of the action, there exists a unique $j, 1 \leq j \leq n-1$, which is sent under $\overline{\omega}_j$ to an irreducible constituent of the action of $\,^{L}\textbf{M}_{\theta_j}$ on the Lie algebra of $\,^{L}\textbf{N}_{\theta_j}$ whose adjoint action is denoted by $r_{i(j)}'$. We denote by $S_i$ the set of all such $j$, which is in general a proper subset of $1 \leq j \leq n-1$. 

Shahidi has associated in \cite{S2}, Theorem 3.5, to $\sigma $ and $r_i$ as above $\gamma $-factors in general. The $\gamma $-factors are uniquely determined by several properties. One important property, called multiplicativity, given in part 3 of Theorem 3.5 of \cite{S2} reads in the above setting:

\begin{theorem}[Multiplicativity of $\gamma$-factors]
\label{Multiplicativity}

For each $j \in S_i$, let $\gamma(s, \overline{\omega}_j(\sigma'), r_{i(j)}', \psi_{F})$ be the corresponding $\gamma$-factor.
Then,
$$\gamma(s, \sigma , r_i, \psi_{F})=\displaystyle\mathop{\prod}\limits_{j \in S_i} \gamma(s, \overline{\omega}_j(\sigma'), r_{i(j)}', \psi_{F}).$$

\end{theorem}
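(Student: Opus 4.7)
The plan is to prove the multiplicativity by decomposing the standard intertwining operator that enters Shahidi's definition of the $\gamma$-factor along the chain of Weyl elements $\widetilde{\omega} = \widetilde{\omega}_1 \cdots \widetilde{\omega}_{n-1}$ provided by \cite[Lemma 4.2.1]{S7}, and matching each rank-one step with one factor of the right-hand side. The guiding principle is that the local coefficient $C(s,\sigma,w)$, whose normalized numerator produces $\gamma(s,\sigma,r_i,\psi_F)$ with $w=\widetilde{\omega}$, is multiplicative in the same way that the intertwining operator is.

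First I would embed $\sigma \hookrightarrow i_{P'}^M(\sigma')$ and apply induction in stages: writing $\textbf{P}_1 = \textbf{P}' \textbf{N}$ for the parabolic of $\textbf{G}$ with Levi $\textbf{M}'$, one gets $i_P^G(\sigma) \hookrightarrow i_{P_1}^G(\sigma')$, and $C(s,\sigma,w)$ is obtained by pulling back to $i_{P_1}^G(\sigma')$ and pairing a Whittaker functional against the standard intertwining operator $A(s,\sigma',w)$. Since $\widetilde{\omega}$ has the prescribed factorization and the corresponding set of positive roots sent to negative splits accordingly into disjoint pieces, $A(s,\sigma',w)$ decomposes as a composition $A(s,\overline{\omega}_{n-1}\sigma',\widetilde{\omega}_{n-1}) \circ \cdots \circ A(s,\sigma',\widetilde{\omega}_1)$ of rank-one operators, each attached to the maximal parabolic of $\textbf{M}_{\Omega_j}$ with Levi $\textbf{M}_{\theta_j}$.

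Next I would compute the local coefficient of this composition. By the naturality of Whittaker models under induction in stages, each rank-one step contributes the local coefficient on $M_{\Omega_j}$ attached to the representation $\overline{\omega}_j(\sigma')$; by the inductive definition that is part of \cite[Theorem 3.5]{S2}, this coefficient is a product of $\gamma$-factors $\gamma(s,\overline{\omega}_j(\sigma'),r'_{i(j)},\psi_F)$, one for each irreducible constituent of the action of $\,^L\textbf{M}_{\theta_j}$ on $\,^L\mathfrak{n}_{\theta_j}$. Collecting all these factors across $j$ and sorting them by the fixed index $i$ gives the claimed product, provided one can show that each irreducible constituent of $r_i|_{\,^L\textbf{M}'}$ is hit by exactly one rank-one step (and no extraneous factor appears); this is exactly what the set $S_i$ records.

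The main obstacle is precisely this combinatorial bookkeeping. The standard route is a root-theoretic one: for each root $\alpha$ of $\,^L\mathfrak{n}$ contributing to a given $\,^L\textbf{M}'$-constituent of $r_i$, one locates the unique index $j$ at which $\overline{\omega}_j(\alpha)$ is still positive but $\widetilde{\omega}_j\overline{\omega}_j(\alpha)$ becomes negative; this step singles out the constituent $r'_{i(j)}$, produces the well-defined map $j \mapsto i(j)$, and shows that the contribution from any other $\,^L\textbf{M}'$-constituent not arising from some $\overline{\omega}_j^{-1}$ is trivial. Once this matching is in place, the identity follows from the uniqueness statement in \cite[Theorem 3.5]{S2}, and taking normalized numerators yields the parallel multiplicativity of the tempered $L$-functions in the sense of Definition \ref{Shahidi's L-function}.
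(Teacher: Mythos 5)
The paper does not prove this statement at all: it is quoted verbatim (in the notation set up just above it) from part 3 of Theorem 3.5 of \cite{S2}, and is used as a black box. So there is no internal proof to compare against; what you have written is an attempt to reconstruct Shahidi's argument. Your sketch correctly identifies the classical mechanism: embed $\sigma\hookrightarrow i_{P'}^M\sigma'$, use induction in stages, decompose the intertwining operator $A(s,\sigma',\widetilde{\omega})$ into rank-one operators along the block decomposition $\widetilde{\omega}=\widetilde{\omega}_1\cdots\widetilde{\omega}_{n-1}$ of \cite[Lemma 4.2.1]{S7}, and use the cocycle property of the local coefficient together with the root-theoretic bookkeeping that defines $S_i$ and the map $j\mapsto i(j)$. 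This is indeed the skeleton of the argument in \cite{S2} (and in Shahidi's earlier work on local coefficients).

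The genuine gap is in the passage from the multiplicativity of the local coefficient to the multiplicativity of each individual $\gamma(s,\sigma,r_i,\psi_F)$. The operator decomposition and the cocycle relation only give you that the \emph{full} local coefficient, which equals $\prod_i\gamma(is,\sigma,r_i,\psi_F)$ over \emph{all} irreducible constituents $r_i$ simultaneously, factors as the product of the local coefficients of the rank-one steps. Splitting that identity into one identity per fixed $i$ --- which is exactly what the theorem asserts --- is not a formal consequence of the local computation: the individual $\gamma$-factors are only pinned down by the inductive definition and the uniqueness argument of \cite[Theorem 3.5]{S2}, whose proof is global (globalization of supercuspidal data and the crude functional equation), not purely local. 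Your last paragraph gestures at this by invoking ``the uniqueness statement in \cite[Theorem 3.5]{S2}'', but at that point you are assuming the very apparatus whose part 3 you are trying to prove; a self-contained proof would have to redo Shahidi's induction on the cuspidal support together with the global input. As a description of why the theorem is true and how it is proved in the literature your account is accurate, but as a proof it is circular at the decisive step, and for the purposes of this paper the correct move is simply to cite \cite[Theorem 3.5(3)]{S2}, as the authors do.
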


The following example gives the multiplicativity of $\gamma$-factors in the case of $GSpin$ groups. This example is used in the proof of Theorem \ref{F:reduction}. Let us first recall that $\gamma(s, \rho \times \pi, \psi_F)$ is the notation for the $\gamma$-factor  $\gamma(s, \rho \otimes \tilde{\pi}, r_1, \psi_F)$ (Definition \ref{Shahidi's L-function} and (\ref{Rankin product gamma-factor})): 

\begin{example}\label{Multi:GSpin}
Let $\pi$ be an irreducible admissible representation of $G_n$. By Jacquet's quotient theorem, there exists a standard parabolic subgroup $\textbf{Q}=\textbf{L}\textbf{V}$ of $\textbf{G}_n$, $\textbf{L}=\textbf{GL}_{n_1} \times \cdots \times \textbf{GL}_{n_k} \times \textbf{G}_{n-n'}$, and an irreducible supercuspidal representation $\rho_1 \otimes \rho_2 \otimes \cdots \otimes \rho_r \otimes \pi_{cusp}$ of $L$ such that $\pi$ is a subrepresentation of $\rho_1 \times \rho_2 \times \cdots \times \rho_r \rtimes \pi_{cusp}$. (Here, each $\rho_i$ is an irreducible supercuspidal representation of some $GL_{n_i}$ and $\pi_{cusp}$ is an irreducible supercuspidal representation of $G_{n-n'}$.) Then, for any irreducible supercuspidal representation $\rho$ of $GL_m$, we have
$$\gamma(s, \rho \times \pi, \psi_F) = \gamma(s, \rho \times \pi_{cusp}, \psi_F) \ \displaystyle\prod\limits_{i=1}^{r} \gamma(s , \rho \times \rho_i, \psi_F) \gamma(s, \rho \times (\widetilde{\rho_i}\otimes \omega_{\pi}), \psi_F).$$
where $\omega_{\pi}$ is the central character of $\pi$.
\end{example}

Remark that the multiplicativity of $L$-functions does not hold in general since the $L$-functions in the numerator and those in the denominator of $\gamma$-factors might cancel each other. However, there are several cases of which multiplicativity of $L$-functions holds. Here, we include the following examples for the multiplicativity of $L$-functions in the case of general linear groups. These examples are used in the proof of Theorem \ref{F:main}.

\begin{example}\label{Multi:GL}

(a) (Proposition on page 351 of \cite{JPS}) Let $\Pi$ (resp. $\Pi'$) be an irreducible tempered representation of $GL_n$ (resp. $GL_{n'}$) of the form $\delta_1 \times \cdots \times \delta_d$ (resp. $\delta_1' \times \cdots \times \delta_{d'}'$), where each $\delta_i$ (resp. $\delta_j'$) is a discrete series representation of some $GL_{n_i}$ (resp. $GL_{n_j'}$). Then, we have
$$L(s, \Pi \times \Pi') = \displaystyle\prod\limits_{i, j} L(s, \delta_i \times \delta_j').$$

\noindent (b) (Theorem on page 444 of \cite{JPS}) Let $\delta$ be a discrete series representation of $GL_n$ which can be realized as the unique irreducible subrepresentation, denoted $\delta([\nu^{-\frac{t-1}{2}}\rho, \nu^{\frac{t-1}{2}}$ $ \rho])$, of $\nu^{\frac{t-1}{2}} \rho \times \cdots \times \nu^{-\frac{t-1}{2}} \rho$ with $\rho$ a unitary supercuspidal representation of some $GL$ (\cite{ZB, Z}). Then, we have
$$L(s, \delta \times \widetilde{\delta}) = \displaystyle\prod\limits_{i=0}^{t-1} L(s + i, \rho \times \widetilde{\rho}).$$
\end{example}

Let us conclude this Section by recalling one of the important global properties of the theory of $L$-functions which is the functional equations.

\begin{theorem}[Global Functional Equation]
\label{GFE}
Let $\underline{\pi}:=\otimes \pi_v$ be a (global) generic cuspidal representation of $\underline{G}_n =\textbf{G}_n(\mathbb{A})$. Let $\gamma(s, \sigma_v \otimes \pi_v, r_{i,v}, \psi_{k_v})$ and $L(s, \sigma_v \otimes \pi_v, r_{i,v})$ (for $i=1, 2$) be the local factors defined by Shahidi in the case of GSpin groups (Definition \ref{Shahidi's L-function}). Let $S$ be a finite set of places of $k$ such that for $v \notin S$, $\textbf{G}_n \times_k k_v$, $\pi_v$ and $\chi_v$ are all unramified.
Then we have the following equality:
$$\displaystyle\prod\limits_{v \notin S} L(s, \pi_v, r_{i,v}) =
\displaystyle\prod\limits_{v \in S} \gamma(s, \sigma_v \otimes \pi_v, r_{i,v}, \psi_{k_v}) \displaystyle\prod\limits_{v \notin S} L(1-s, \pi_v, \widetilde{r_{i,v}}) $$
\end{theorem}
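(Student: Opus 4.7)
The plan is to derive this global functional equation from the theory of Eisenstein series attached to a globally generic cuspidal automorphic representation on the Levi subgroup $\mathbf{M} \cong \mathbf{GL}_m \times \mathbf{G}_n$ of the standard maximal parabolic $\mathbf{P} = \mathbf{MN}$ of $\mathbf{G}_{m+n}$, following the template of the Langlands--Shahidi method. Let $\underline{\sigma} \otimes \underline{\pi}$ be a globally generic cuspidal representation of $\mathbf{M}(\mathbb{A})$ whose local components at $v \in S$ are the given $\sigma_v \otimes \pi_v$ (such a representation exists because we may take $\underline{\pi}$ as assumed and combine it with a suitable $\underline{\sigma}$; at unramified places everything is spherical). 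Form the induced representation $I(s, \underline{\sigma} \otimes \underline{\pi})$ by twisting with the complex parameter $s$ along the fundamental weight dual to the simple root defining $\mathbf{P}$, and attach to a decomposable section $\Phi = \otimes_v \Phi_v$ the Eisenstein series $E(s, \Phi, g)$, which is meromorphic on $\mathbb{C}$ and satisfies the functional equation $E(s, \Phi, g) = E(-s, M(s)\Phi, g)$, where $M(s)$ is the global standard intertwining operator into $I(-s, w_0(\underline{\sigma}\otimes \underline{\pi}))$.

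Next I would compute the $\psi$-Whittaker Fourier coefficient of $E(s, \Phi, g)$. Because $\underline{\sigma} \otimes \underline{\pi}$ is globally generic, this Whittaker coefficient is nonzero and factors into a product of local Whittaker functionals evaluated on the $\Phi_v$. Applying the same computation to $E(-s, M(s)\Phi, g)$ and using the functional equation produces, after equating the two Whittaker coefficients, an identity of the form
\begin{equation*}
\prod_v \lambda_v(s, \sigma_v \otimes \pi_v)(\Phi_v) = \prod_v \lambda_v(-s, w_0(\sigma_v \otimes \pi_v))(A_v(s)\Phi_v),
\end{equation*}
where $\lambda_v$ is the local Whittaker functional and $A_v(s)$ is the local standard intertwining operator. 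By Shahidi's local coefficient theorem, the ratio on each side defines a local coefficient $C_{\psi_v}(s, \sigma_v \otimes \pi_v)$, so the identity collapses to $\prod_v C_{\psi_v}(s, \sigma_v \otimes \pi_v) = 1$. The local coefficient decomposes according to the irreducible constituents $r_i$ of the adjoint action of $\,^L\mathbf{M}$ on $\,^L\mathfrak{n}$, and Shahidi's definition of local $\gamma$-factors is precisely the (inductively normalized) contribution at each $r_i$.

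The heart of the argument is then the evaluation of $C_{\psi_v}(s, \sigma_v \otimes \pi_v)$ at a place $v \notin S$. Here $\sigma_v \otimes \pi_v$ is unramified, $\psi_v$ is unramified, and $\Phi_v$ may be chosen to be the normalized spherical vector. The Gindikin--Karpelevich formula, combined with the Casselman--Shalika formula for the spherical Whittaker function, yields explicitly
\begin{equation*}
C_{\psi_v}(s, \sigma_v \otimes \pi_v)^{-1} = \prod_i \frac{L(s, \sigma_v \otimes \pi_v, r_{i,v})}{L(1+s, \sigma_v \otimes \pi_v, r_{i,v})\,\varepsilon(s, \sigma_v \otimes \pi_v, r_{i,v}, \psi_v)},
\end{equation*}
with trivial $\varepsilon$-factor at unramified data. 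Multiplying over $v$ and isolating the $r_i$-component, one obtains an identity whose unramified part on one side rearranges into $\prod_{v\notin S} L(s, \sigma_v \otimes \pi_v, r_{i,v}) / L(1-s, \widetilde{\sigma_v \otimes \pi_v}, r_{i,v})$, while the ramified part folds into $\prod_{v \in S} \gamma(s, \sigma_v \otimes \pi_v, r_{i,v}, \psi_{k_v})$. Cross-multiplying produces the claimed functional equation.

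The main obstacle is the explicit unramified computation, i.e.\ the verification that the local coefficient at an unramified place equals the expected product of ratios of local $L$-values; this requires the Casselman--Shalika formula and the combinatorics of $w_0$ acting on the constituents $r_i$. All other steps (meromorphic continuation and functional equation of $E$, factorization of the Whittaker coefficient, definition of the local coefficient, reduction to the individual $r_i$) are formal consequences of the cuspidality and genericity of $\underline{\sigma}\otimes \underline{\pi}$ together with Shahidi's Theorem~3.5 of \cite{S2}, which is already invoked in Definition~\ref{Shahidi's L-function}.
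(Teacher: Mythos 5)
The paper offers no proof of this statement---it is quoted verbatim from the Langlands--Shahidi theory (Theorem 3.5(4) of \cite{S2})---and your sketch is exactly the standard argument of that reference: functional equation of the Eisenstein series, factorized Whittaker--Fourier coefficient, the crude functional equation $\prod_v C_{\psi_v}(s,\sigma_v\otimes\pi_v)=1$, the Gindikin--Karpelevich/Casselman--Shalika evaluation at unramified places, and the separation into the constituents $r_i$. The one thin point is that ``isolating the $r_i$-component'' is not a formal consequence of the unramified computation alone but requires Shahidi's induction on the constituents of the adjoint action (which is how the individual $\gamma$-factors are defined in the first place); since you explicitly defer that step to Theorem 3.5 of \cite{S2}, on which the paper itself relies, your proposal is correct and follows the same route as the cited source.
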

$ $

\subsection{Global functoriality}
$ $

Asgari and Shahidi studied the functorial lifts from automorphic representations of $\underline{G}_n$ to $\underline{GL}_{2n}$ (\cite{AS1} and \cite{AS2}). To state the main theorem in \cite{AS1, AS2}, let $k$ be global field and fix a Borel subgroup $\textbf{B}$ in $\textbf{G}_n$ with a maximal split torus $\textbf{T}$, and denote the associated roots by $R$ and the positive roots by $R^+$. For each $\alpha \in R$ denote the root group homomorphism associated with $\alpha$ by $u_{\alpha}: \mathbb{G}_a \rightarrow \textbf{G}$. We also denote unipotent radical of $\textbf{B}$ by $\textbf{U}$. To define a generic character of $\textbf{U}(k) \backslash \underline{U}$, we fix a splitting, i.e., the choice of Borel subgroup along with a collection of root vectors in $\textbf{U}$, one for each simple root of $\textbf{T}$. Then, using a fixed splitting, every $u \in \underline{U}$ can be written uniquely as $u = \prod u_{\alpha}(x_{\alpha})$, where the product runs over all simple roots. Let $\psi_k$ be a non-trivial continuous character of $k \backslash \mathbb{A}$. Then $\psi_k = \otimes \psi_{k_v}$, where each $\psi_{k_v}$ is a non-trivial additive character of $k_v$. Then we define a generic character $\chi$ of $\textbf{U}(k) \backslash \underline{U}$ as
$$\chi(u)=\psi_k( \displaystyle\sum\limits_{\alpha}x_{\alpha})$$
where the sum runs over all simple roots.

Now, we are ready to define the globally generic representations. An irreducible automorphic cuspidal representation $(\underline{\pi}, V_{\underline{\pi}})$ of $\underline{G}_n$ is called globally generic if there exists a cusp form $f \in V_{\underline{\pi}}$ such that
$$ \int_{\textbf{U}(k) \backslash \underline{U}} f(ng) \chi^{-1}(n) dn \neq 0.$$

The following theorem is Theorem 3.5 of \cite{AS2}:

\begin{theorem}[\cite{AS1}, \cite{AS2}]\label{F:global}

Let $\underline{\pi} = \otimes \pi_v$ be a globally generic, irreducible, cuspidal, automorphic representation of $\underline{G}_n$ with central character $\omega_{\underline{\pi}}$. Write $\chi=\otimes \chi_v$. Let $S$ be a nonempty finite set of non-archimedean places $v$ such that, for $v \notin S$, we have that $\pi_v$ and $\chi_v$ are unramified. Then $\underline{\pi}$ has a unique functorial transfer to an automorphic representation $\underline{\Pi}=\otimes \Pi_v$ of $\underline{GL}_{2n}$ such that, for all $v \notin S$, the homomorphism parametrizing the local representation $\Pi_v$ is given by
\begin{equation}\label{homomorphism}
\Phi_v = \iota \circ \phi_v : W_v \rightarrow GL_{2n}(\mathbb{C}),
\end{equation}
where $W_v$ denotes the local Weil group of $k_v$, $\phi_v : W_v \rightarrow \,^{L}\textbf{G}_n$ the homomorphism parametrizing $\pi_v$ and $\iota: \,^{L}\textbf{G}_n\hookrightarrow GL_{2n}(\mathbb{C})$ the natural embedding. Moreover, the transfer $\underline{\Pi}$ satisfies
$$\underline{\Pi} \cong \widetilde{\underline{\Pi}} \otimes \omega_{\underline{\pi}} \ \ \text{and} \ \ \omega_{\underline{\Pi}} = \omega_{\underline{\pi}}^n,$$
The automorphic representation $\underline{\Pi}$ is an isobaric sum of the form
\begin{equation}\label{isobaric}
\underline{\Pi}=\underline{\Pi}_1 \times \cdots \times \underline{\Pi}_t= \underline{\Pi}_1 \boxplus \cdots \boxplus \underline{\Pi}_t,
\end{equation}
where each $\underline{\Pi}_i$ is a unitary, cuspidal representation of $\underline{GL}_{n_i}$ such that, for T a sufficiently large finite set of places of $k$ containing the archimedean places, the partial $L$-function $L^T(s, \underline{\Pi}_i, \wedge^2 \otimes \omega_{\underline{\pi}})$ has a pole at $s=1$ in the odd case and $L^T(s, \underline{\Pi}_i, Sym^2 \otimes \omega_{\underline{\pi}})$ has a pole at $s=1$ in the even case. We have $\underline{\Pi}_i \ncong \underline{\Pi}_j$ if $i \neq j$, and $n_1 + \cdots + n_t = 2n$ with each $n_i > 1$.

\end{theorem}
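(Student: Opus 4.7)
The plan is to invoke the Converse Theorem of Cogdell--Piatetski-Shapiro for $\underline{GL}_{2n}$, which reconstructs a global automorphic representation from a family of twisted $L$-functions having the expected analytic properties. First I would build a candidate representation $\underline{\Pi} = \otimes \Pi_v$: at each unramified place $v \notin S$ put $\Pi_v$ equal to the spherical representation whose Satake parameter is $\iota \circ \phi_v$, and at the finitely many remaining places choose an arbitrary irreducible admissible generic representation whose central character matches $\omega_{\underline{\pi}}^n$. The task is then to verify that, for every cuspidal automorphic representation $\underline{\tau}$ of $\underline{GL}_r$ with $r \leq 2n - 1$, the completed $L$-function $L(s, \underline{\Pi} \times \underline{\tau})$ is entire, bounded in vertical strips of finite width, and satisfies the expected functional equation.

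The key input is the Langlands--Shahidi machinery applied to a maximal parabolic of an ambient $GSpin$ group whose Levi contains $\textbf{GL}_r \times \textbf{G}_n$. Eisenstein series and Shahidi's theory of local coefficients produce meromorphic continuation and a functional equation for $L(s, \underline{\tau} \times \underline{\pi})$, along with boundedness in vertical strips. At unramified places Shahidi's computation identifies this Langlands--Shahidi factor with the Rankin--Selberg factor for $\underline{GL}_r \times \underline{GL}_{2n}$ attached to $\underline{\tau}$ and the candidate $\underline{\Pi}$; at ramified places the two factors need not agree, and this mismatch is the principal obstacle.

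The obstacle is circumvented via the Cogdell--Piatetski-Shapiro twisting technique: replace $\underline{\tau}$ by $\underline{\tau} \otimes \eta$ for a sufficiently highly ramified idele class character $\eta$, and invoke stability of local $\gamma$-factors under such twists so that the Langlands--Shahidi and Rankin--Selberg local factors coincide at every place in the auxiliary bad set. Applying the converse theorem to the twisted family then yields a weak functorial transfer that agrees with the candidate outside any prescribed finite $S$; varying $S$ and using strong multiplicity one for $\underline{GL}_{2n}$ upgrades this to a genuine automorphic transfer. The hardest step is certainly the stability of $\gamma$-factors in the $GSpin$ setting at non-archimedean ramified places, since it must be established with uniform control sufficient to feed into the converse theorem alongside the analytic control of the Eisenstein side.

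Finally, for the structural claims, the self-duality relation $\underline{\Pi} \cong \widetilde{\underline{\Pi}} \otimes \omega_{\underline{\pi}}$ and the formula $\omega_{\underline{\Pi}} = \omega_{\underline{\pi}}^n$ fall out of the dual group calculation: $\iota(\,^{L}\textbf{G}_n)$ preserves a bilinear form up to the similitude character $\mu$, so the unramified parameters satisfy the corresponding identity, and strong multiplicity one propagates it globally. The isobaric decomposition $\underline{\Pi} = \underline{\Pi}_1 \boxplus \cdots \boxplus \underline{\Pi}_t$ then inherits this self-duality factor by factor, and the Jacquet--Shalika dichotomy, applied to the factorization of $L^T(s, \underline{\Pi}_i \times \widetilde{\underline{\Pi}}_i)$ into twisted exterior and symmetric square $L$-functions, forces exactly one of them to have a pole at $s = 1$, with the parity matched to whether the target $\,^{L}\textbf{G}_n$ is symplectic (odd case) or orthogonal (even case); distinctness of the $\underline{\Pi}_i$ follows from the simplicity of this pole.
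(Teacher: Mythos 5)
This theorem is not proved in the paper at all: it is quoted verbatim as Theorem 3.5 of \cite{AS2} (building on \cite{AS1}) and used as an external input. So the relevant comparison is citation versus reproof, and your outline is, in substance, a faithful summary of how the cited references actually argue: existence of the transfer in \cite{AS1} via the Cogdell--Piatetski-Shapiro converse theorem for $GL_{2n}$, with the analytic properties of the twisted $L$-functions coming from the Langlands--Shahidi method on an ambient $GSpin$ group, the local mismatch at bad places absorbed by highly ramified twists, and stability of $\gamma$-factors (correctly identified by you as the hard analytic input); the structural statements (self-duality up to $\omega_{\underline{\pi}}$, $\omega_{\underline{\Pi}}=\omega_{\underline{\pi}}^n$, the isobaric decomposition with the pole condition) are the content of \cite{AS2}.

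If your sketch were to stand as a proof, two points are genuinely incomplete. First, the Jacquet--Shalika dichotomy only tells you that \emph{exactly one} of $L^T(s,\underline{\Pi}_i,\wedge^2\otimes\omega_{\underline{\pi}})$ and $L^T(s,\underline{\Pi}_i,Sym^2\otimes\omega_{\underline{\pi}})$ has a pole at $s=1$; identifying \emph{which} one, with the parity matching the symplectic/orthogonal nature of $^L\textbf{G}_n$, is precisely the nontrivial step of \cite{AS2} and requires a separate argument (there, an analysis of the twisted exterior/symmetric square $L$-functions of $\underline{\pi}$ itself via the Langlands--Shahidi method), not just the dual-group computation at unramified places. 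Second, your outline omits the claims $n_i>1$ (no $GL_1$-constituent occurs) and gives only a gesture toward $\underline{\Pi}_i\ncong\underline{\Pi}_j$; the latter needs the input that $L^T(s,\underline{\Pi}\times\widetilde{\underline{\tau}})$ has at most a simple pole at $s=1$ for cuspidal $\underline{\tau}$, which comes from the theory of Eisenstein series on the $GSpin$ side and should be stated explicitly. With those points supplied, the sketch matches the proof in the cited sources.
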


\section{The equality of $L$-functions and Langlands parameter}
\label{LP}

\subsection{The equality of $L$-functions: supercuspidal case}
\label{sc:L}
$ $

In this Section, we adapt the idea in \cite{CKSS1, CKSS2} to generalize it to $GSpin$ groups. In the proof of Theorem \ref{F:global}, using the natural embedding  (\ref{homomorphism}), Asgari and Shahidi constructed archimedean and non-archimedean unramified transfer with equality of $\gamma$-factors and $L$-functions in those cases (Chapter 6 of \cite{AS1} or Proposition 3.20 and Proposition 3.25 of \cite{AS2}):


\begin{proposition}[\cite{AS1, AS2}]\label{F:S}
Let $v_0$ be an archimedean or non-archimedean place of $k$. Let $\pi$ be an irreducible, admissible, generic representation of $G_n$. We further assume that $\pi$ is unramified if $v_0$ is a non-archimedean place. Then, there exists a unique local functorial lift $\Pi$ of $\pi $ to $GL_{2n}$ such that, for any irreducible, admissible, generic representation $\sigma$ of $GL_{m}$, we have
$$\gamma(s, \sigma \times \pi, \psi_{k_{v_0}}) = \gamma(s, \sigma \times \Pi, \psi_{k_{v_0}}) \ \ \text{and} \ \ L(s, \sigma \times \pi) = L(s, \sigma \times \Pi).$$
\end{proposition}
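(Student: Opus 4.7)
The plan is to construct the local lift $\Pi$ by transporting the Langlands parameter of $\pi$ through the natural embedding $\iota\colon{}^L\mathbf{G}_n\hookrightarrow GL_{2n}(\mathbb C)$, and then to verify equality of local factors case by case. In both situations of the statement, a Langlands parameter of $\pi$ is available without invoking the (still conjectural) local Langlands correspondence in general: over an archimedean place $v_0$, Langlands' classification for real reductive groups (cf.\ \cite{L}) furnishes an admissible homomorphism $\phi\colon W_{k_{v_0}}\to{}^L\mathbf{G}_n$ attached to $\pi$, and over a non-archimedean place where $\pi$ is unramified, the Satake isomorphism produces an unramified parameter $\phi\colon W_{k_{v_0}}\to{}^L\mathbf{G}_n$ whose image is determined by the Satake parameter of $\pi$. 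In each case I would define $\Pi$ to be the irreducible admissible representation of $GL_{2n}(k_{v_0})$ whose Langlands parameter is $\Phi=\iota\circ\phi$; existence and uniqueness of $\Pi$ are then guaranteed by the established local Langlands correspondence for $GL_{2n}$ (archimedean Langlands, or Satake in the unramified case). The genericity of $\Pi$ reduces to checking that $\Phi$ is of the expected type, which is automatic because $\iota$ preserves the relevant semisimplicity and the image of a tempered/generic parameter stays so under $\iota$.

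For the equality of $\gamma$-factors and $L$-functions in the archimedean case, my plan is to appeal directly to Shahidi's archimedean theorem (\cite{S5}): the $\gamma$-factors and $L$-functions defined by the Langlands--Shahidi method at archimedean places coincide with the Artin factors of the parameter, and analogously the Rankin--Selberg $L$- and $\gamma$-factors for $GL_m\times GL_{2n}$ are the Artin factors of the tensor product parameter. Since by construction the parameter of $\Pi$ is $\iota\circ\phi$, the tensor product $\Phi_\sigma\otimes\Phi$ agrees with $\Phi_\sigma\otimes(\iota\circ\phi)$, which is exactly the representation of $W_{k_{v_0}}$ governing the Rankin product factor $\gamma(s,\sigma\times\pi,\psi_{k_{v_0}})$ on the $\mathbf G_n$-side. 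Identifying the two Artin factors then closes the argument.

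For the unramified non-archimedean case, everything is explicit. Both $\pi$ and $\Pi$ are unramified with Satake parameters $t_\pi$ and $\iota(t_\pi)$ respectively; the Rankin product $\gamma$- and $L$-factors on either side are unramified local factors, so they depend only on the Satake parameters and on $\sigma$. When $\sigma$ is also unramified, the equality is a direct matrix computation using that $r_1=\rho_m\otimes\widetilde R$ on the $\mathbf G_n$-side corresponds to $\rho_m\otimes\widetilde{(\iota\cdot)}$ on the $GL_{2n}$-side. For a ramified $\sigma$ one reduces to the unramified case via multiplicativity of $\gamma$-factors (Theorem \ref{Multiplicativity}): write $\sigma$ as a subquotient of a representation induced from supercuspidals, decompose both $\gamma(s,\sigma\times\pi,\psi_{k_{v_0}})$ and $\gamma(s,\sigma\times\Pi,\psi_{k_{v_0}})$ into products of $\gamma$-factors of the inducing data against $\pi$ (resp.\ $\Pi$), and observe that each factor in the product involves a twist of an unramified representation, hence is governed by Satake data on both sides. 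Equality of $L$-functions in the tempered case then follows from the equality of $\gamma$-factors by Definition \ref{Shahidi's L-function}, and the general case is handled via Langlands classification.

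The uniqueness of $\Pi$ comes from the fact that $\gamma$-factors against all generic $\sigma$ of $GL_m$ (for every $m$) determine an irreducible admissible generic representation of $GL_{2n}$ (local converse theorem for $GL_{2n}$). The main obstacle that I anticipate is the ramified-$\sigma$ reduction in the unramified non-archimedean case: one must carefully track, through the multiplicativity formula in the general setting, which Weyl-element twists appear on the $\mathbf G_n$-side versus the $GL_{2n}$-side and verify that the resulting products coincide factor-by-factor; once this bookkeeping is set up, the coincidence is forced by $\Phi=\iota\circ\phi$ together with the explicit description of the adjoint representation $r_1=\rho_m\otimes\widetilde R$.
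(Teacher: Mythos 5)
The paper does not actually prove this proposition: it is quoted verbatim from Asgari--Shahidi (Chapter 6 of \cite{AS1}, Propositions 3.20 and 3.25 of \cite{AS2}), so your proposal can only be measured against their argument. Your overall strategy --- define $\Pi$ by the parameter $\Phi=\iota\circ\phi$, invoke \cite{S5} at the archimedean place to identify both sides with Artin factors, and treat the unramified non-archimedean place by an explicit Satake computation plus multiplicativity for ramified twists --- is exactly their route, and the archimedean half of your sketch is sound.

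There is, however, a concrete gap in your unramified non-archimedean reduction: you propose to handle a ramified $\sigma$ by writing $\sigma$ as a subquotient of a representation induced from supercuspidals and applying multiplicativity in the $\sigma$-variable. This leaves you with factors $\gamma(s,\rho_i\times\pi,\psi)$ where $\rho_i$ is a possibly \emph{ramified} supercuspidal representation of some $GL_{m_i}$ and $\pi$ is the unramified representation of $G_n$; such a factor is not ``a twist of an unramified representation'' and is not computable from Satake data, so the argument does not close. The correct move is to decompose the other factor: since $\pi$ is generic and unramified, it is a constituent of an unramified principal series $\chi_1\times\cdots\times\chi_n\rtimes\chi_0$ induced from the Borel, and multiplicativity in the $\pi$-variable (the analogue of Example \ref{Multi:GSpin} with $L$ the minimal Levi) expresses $\gamma(s,\sigma\times\pi,\psi_{k_{v_0}})$ as a product of $GL_m\times GL_1$ factors $\gamma(s,\sigma\otimes\chi_i,\psi)\,\gamma(s,\sigma\otimes\chi_i^{-1}\omega_\pi,\psi)$ (times the factor coming from $\chi_0$). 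On the other side, $\Pi$ is by construction the unramified constituent of the principal series of $GL_{2n}$ attached to the $2n$ characters $\chi_1,\dots,\chi_n,\chi_n^{-1}\omega_\pi,\dots,\chi_1^{-1}\omega_\pi$, and Rankin--Selberg multiplicativity produces the identical product. This term-by-term match is what forces the equality; only after it is established do you pass to $L$-functions via Definition \ref{Shahidi's L-function} and the Langlands classification, as you indicate. With that correction (and noting that the local converse theorem gives uniqueness only within the class of generic representations, which suffices here), your sketch matches the cited proof.
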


Now, we are ready to show the equality of $L$-functions for any place $v$ of $k$.

\begin{theorem}\label{F:local2}

Let $v_0$ be a non-archimedean place of $k$ and let again $k_{v_0}=F$. Let $\pi$ be an irreducible unitary generic supercuspidal representation of $G_n$. Then there exists a unique generic representation $\Pi$ of $GL_{2n}$ such that for every irreducible unitary supercuspidal representation $\rho$ of $GL_m$ we have
$$\gamma(s, \rho \times \pi, \psi_F) = \gamma(s, \rho \times \Pi, \psi_F) \ \ \text{and} \ \ L(s, \rho \times \pi) = L(s, \rho \times \Pi).$$\label{equality}
In particular, $\Pi = \delta_{1} \times \cdots \times \delta_{d}$ with each $\delta_{i}$ a discrete series representation, i.e., $\Pi$ is tempered.

\end{theorem}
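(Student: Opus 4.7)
The plan is to globalize the problem following the strategy of Cogdell-Kim-Piatetski-Shapiro-Shahidi \cite{CKSS1, CKSS2} for classical groups. By Shahidi's globalization result \cite[Proposition 5.1]{S2}, embed $\pi$ into a globally generic cuspidal representation $\underline{\pi} = \otimes \underline{\pi}_v$ of $\underline{G}_n$ with $\underline{\pi}_{v_0} = \pi$ and $\underline{\pi}_v$ unramified at every finite place $v \neq v_0$. Apply Theorem \ref{F:global} to obtain a globally generic functorial lift $\underline{\Pi} = \underline{\Pi}_1 \boxplus \cdots \boxplus \underline{\Pi}_t$ on $\underline{GL}_{2n}$, where each $\underline{\Pi}_i$ is unitary cuspidal on $\underline{GL}_{n_i}$, and set $\Pi := \underline{\Pi}_{v_0}$.

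For the equality of $\gamma$-factors at $v_0$, given any irreducible unitary supercuspidal $\rho$ of $GL_m(F)$, apply \cite[Proposition 5.1]{S2} once more to produce a globally generic cuspidal representation $\underline{\sigma}$ of $\underline{GL}_m$ with $\underline{\sigma}_{v_0} = \rho$ and $\underline{\sigma}_v$ unramified at all other finite places. Writing the global functional equation (Theorem \ref{GFE}) for both $\underline{\sigma} \otimes \underline{\pi}$ on the Langlands-Shahidi side and $\underline{\sigma} \times \underline{\Pi}$ on the Rankin-Selberg side, and invoking Proposition \ref{F:S} to cancel the equal local factors at every archimedean place and every unramified finite place, forces the remaining factors at $v_0$ to match: $\gamma(s, \rho \times \pi, \psi_F) = \gamma(s, \rho \times \Pi, \psi_F)$.

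The main obstacle is proving temperedness of $\Pi$, equivalently that each $(\underline{\Pi}_i)_{v_0}$ is tempered. By the Tadi\'c classification, a generic unitary representation of $GL_{n_i}(F)$ has the form $\delta_1 \times \cdots \times \delta_r \times \prod_j (\delta'_j \nu^{\alpha_j} \times \delta'_j \nu^{-\alpha_j})$ with $\delta_k, \delta'_j$ discrete series and $0 < \alpha_j < 1/2$; temperedness amounts to the absence of the complementary-series factors. I argue by contradiction: a factor with $\alpha_j > 0$ would, via Example \ref{Multi:GL}(a) combined with the explicit computation of Rankin-Selberg $L$-functions attached to complementary series, produce a pole of $L(s, \widetilde{\rho} \times \Pi)$ at some real point with positive real part, for a suitable supercuspidal $\rho$ extracted from the cuspidal support of $\delta'_j$. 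On the automorphic side, the Casselman-Shahidi analysis of the Harish-Chandra $\mu$-function forces $L(s, \rho \times \pi)$ to be holomorphic on $\text{Re}(s) > 0$ when $\pi$ is generic supercuspidal. Since local $L$-factors on $GL_n$ have no zeros, the $\gamma$-factor identity from the previous step transports the offending pole to the supercuspidal side, contradicting that holomorphy. Hence each $(\underline{\Pi}_i)_{v_0}$ is tempered, and $\Pi$ takes the claimed parabolically-induced form $\delta_1 \times \cdots \times \delta_d$.

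Once temperedness is established, Definition \ref{Shahidi's L-function} on the Langlands-Shahidi side and Example \ref{Multi:GL}(a) on the Rankin-Selberg side both express the $L$-function as the normalized inverse of the numerator of the corresponding $\gamma$-factor, so the equality of $\gamma$-factors immediately upgrades to the equality $L(s, \rho \times \pi) = L(s, \rho \times \Pi)$. Uniqueness of $\Pi$ follows from the multiplicativity of $\gamma$-factors (Theorem \ref{Multiplicativity}), which reduces the question to matching discrete-series constituents, together with Henniart's local converse theorem for $GL_n$.
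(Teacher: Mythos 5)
Your globalization step, the use of the global functional equation together with Proposition \ref{F:S} to deduce $\gamma(s,\rho\times\pi,\psi_F)=\gamma(s,\rho\times\Pi,\psi_F)$ for supercuspidal $\rho$, the appeal to Tadi\'c's classification of generic unitary representations, and the use of the local converse theorem for uniqueness all coincide with the paper. The gap is in the temperedness step. You propose to detect a complementary-series factor $\nu^{\alpha_j}\delta_j'\times\nu^{-\alpha_j}\delta_j'$ by twisting with a supercuspidal $\rho$ taken from the cuspidal support of $\delta_j'$ and exhibiting a pole of $L(s,\widetilde{\rho}\times\Pi)$ with positive real part. But if $\delta_j'=\delta([\nu^{-\frac{t-1}{2}}\rho',\nu^{\frac{t-1}{2}}\rho'])$ with $t\geq 2$, then $L(s,\widetilde{\rho'}\times\delta_j')=L(s+\frac{t-1}{2},\widetilde{\rho'}\times\rho')$, whose poles lie on $\mathrm{Re}(s)=-\frac{t-1}{2}$; hence the poles of $L(s,\widetilde{\rho'}\times\nu^{\mp\alpha_j}\delta_j')$ lie on $\mathrm{Re}(s)=\pm\alpha_j-\frac{t-1}{2}\leq \alpha_j-\frac{1}{2}<0$. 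No supercuspidal twist produces a pole in the right half-plane, so your contradiction never materializes unless $\delta_j'$ is itself supercuspidal. The paper avoids this by first upgrading the $\gamma$-identity from supercuspidal to discrete-series twists $\sigma$ via multiplicativity (Theorem \ref{Multiplicativity}), and then taking $\sigma=\widetilde{\delta_j'}$ itself: by Example \ref{Multi:GL}(b), $L(s-\alpha_j,\widetilde{\delta_j'}\times\delta_j')$ has a pole at $s=\alpha_j>0$, while $L(s,\widetilde{\delta_j'}\times\pi)$ is holomorphic for $\mathrm{Re}(s)>0$ by the tempered $L$-function theorem \cite[Theorem 5.7]{A}.

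A second, smaller point: you cannot literally ``transport the offending pole'' through the $\gamma$-identity before temperedness of $\Pi$ is known, since the equality $L(s,\sigma\times\pi)=L(s,\sigma\times\Pi)$ is itself a consequence of temperedness. What the paper does is identify $L(s,\sigma\times\pi)$ --- defined, $\sigma\times\pi$ being tempered, as the inverse of the normalized numerator of the common $\gamma$-factor --- with the explicit product of $GL$-side $L$-functions coming from the Tadi\'c decomposition of $\Pi$, using that the numerator and denominator of the Rankin--Selberg $\gamma$-factor are relatively prime (\cite{JPS}); only after this identification is the pole comparison legitimate. Your remark that local $L$-factors have no zeros points in the right direction but does not by itself exclude cancellation between the numerator and denominator of the $\gamma$-factor. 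Once these two points are repaired along the paper's lines, the rest of your argument goes through.
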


\begin{proof}


We can embed $\pi$ (resp. $\rho$) into a (global) generic automorphic cuspidal representation $\underline{\pi}=\otimes \pi_v$ (resp. $\underline{\rho}= \otimes \rho_v$) such that $\pi_w$ (resp. $\rho_w$) is unramified for all other finite places $w \neq v_0$ and $\pi_{v_0}=\pi$ (resp. $\rho_{v_0} = \rho$) (\cite[Proposition 5.1]{S2}). In the archimedean or non-archimedean unramified case, i.e. for $w \neq v_0$, we know the equality of $\gamma$-factors (Proposition \ref{F:S}). The global functional equation (Theorem \ref{GFE}) implies that
$$\gamma(s, \rho \times \pi, \psi_F) = \gamma(s, \rho \times \Pi, \psi_F)$$
for every irreducible supercuspidal representation $\rho $ of $GL_m$.

Using the multiplicativity of $\gamma$-factors, we get the equality $\gamma(s, \sigma \times \pi, \psi_F) = \gamma(s, \sigma \times \Pi, \psi_F)$
for a discrete series representation $\sigma$. The discrete series representation $\sigma$ can be realized as the unique irreducible subrepresentation $\delta([\nu^{-\frac{t-1}{2}}\rho, \nu^{\frac{t-1}{2}}$ $ \rho])$ of $\nu^{\frac{t-1}{2}} \rho \times \cdots \times \nu^{-\frac{t-1}{2}} \rho$ with $\rho$ a unitary supercuspidal representation of some $GL$ (\cite{ZB, Z}).
Since $\Pi$ is generic and unitary, $\Pi$ can be considered as full induced representation $\nu^{r_1}\delta_{1} \times  \cdots \times \nu^{r_k}\delta_{k} \times \delta_{k+1} \times \cdots \times \delta_{k+l} \times \nu^{-r_k}\delta_{k} \times \cdots \times \nu^{-r_1}\delta_{1}$ by the classification of unitary generic representations of $GL$ with each $\delta_{i}$ a discrete series representation of some $GL$ and $0 < r_k \leq \cdots \leq r_1 < \frac{1}{2}$ (\cite{T4}). Using the multiplicativity of $\gamma$-factors and the definition of $\gamma$-factors,

$$\gamma(s, \sigma \times \pi, \psi_F) = \gamma(s, \sigma \times \Pi, \psi_F)$$

$=\displaystyle\prod\limits_{j=1}^{k} \gamma(s+r_j, \sigma \times \delta_{j}, \psi_F) \gamma(s-r_j, \sigma \times \delta_{j}, \psi_F) \times \displaystyle\prod\limits_{i=1}^{l} \gamma(s, \sigma \times \delta_{k+i}, \psi_F).$

$\sim \frac{(\displaystyle\prod\limits_{j=1}^{k} L(s+r_j, \sigma \times \delta_{j})L(s-r_j, \sigma \times \delta_{j})\displaystyle\prod\limits_{i=1}^{l} L(s, \sigma \times \delta_{k+i}))^{-1}}{(\displaystyle\prod\limits_{j=1}^{k} L(1-s-r_j, \widetilde{\sigma} \times \widetilde{\delta_{j}})L(1-s+r_j, \widetilde{\sigma} \times \widetilde{\delta_{j}})\displaystyle\prod\limits_{i=1}^{l} L(1-s, \widetilde{\sigma} \times \widetilde{\delta_{k+i}}))^{-1}}$

where $\sim$ means that the two factors are equal up to a monomial in $q^{-s}$.

We can define $L(s, \sigma \times \pi)$ as the inverse of the normalized numerator of $\gamma(s, \sigma \times \pi, \psi_F)$ since $\sigma \times \pi$ is tempered (Definition \ref{Shahidi's L-function}). Furthermore, we know that the numerator and the denominator are relatively prime (see \cite{JPS}). Therefore,
$$L(s, \sigma \times \pi) =  \displaystyle\prod\limits_{j=1}^{k} L(s+r_j, \sigma \times \delta_{j})L(s-r_j, \sigma \times \delta_{j})\displaystyle\prod\limits_{i=1}^{l} L(s, \sigma \times \delta_{k+i}).$$
We consider the equality of $L$-functions above with $\sigma = \widetilde{\delta_{i}}$ for $i=1, \ldots, k$. The tempered $L$-function conjecture \cite[Theorem 5.7]{A} implies that $L(s, \widetilde{\delta_{i}} \times  \pi)$ is holomorphic for $Re(s)>0$. On the other hand, the right hand side has a pole at $s=r_i>0$ since $L(s-r_i, \widetilde{\delta_{i}} \times \delta_{i})$ has a pole at $s=r_i>0$. This implies that $k=0$ and $\Pi = \delta_{1} \times \cdots \times \delta_{d}$ with each $\delta_{i}$ a discrete series representation, i.e. $\Pi$ is tempered. Therefore, $L(s, \sigma \times \Pi)$ is also the inverse of the normalized numerator of $\gamma(s, \sigma \times \Pi, \psi_F)$ (Definition \ref{Shahidi's L-function}). Consequently, we have the following equality of $L$-functions:
$$L(s,\sigma \times \pi)=L(s, \sigma \times \Pi)$$
for every discrete series representation $\sigma$ of $GL$.

Now it remains to show the uniqueness of the local functorial lift. This follows from the local converse theorem for $GL_{2n}$ (\cite{H0}). If $\Pi_1$ and $\Pi_2$ are local functorial lifts of $\pi$, i.e. irreducible admissible generic representations of $GL$ that satisfy the equality of $\gamma$- and $L$-functions in Theorem \ref{equality}.  By assumption, we have $\gamma(s, \sigma \times \pi, \psi_F) = \gamma(s, \sigma \times \Pi_1, \psi_F)=\gamma(s, \sigma \times \Pi_2, \psi_F)$. Therefore, local converse theorem for $GL_{2n}$ (Corollary in \cite[Section 1.1]{H0}) implies the uniqueness of the local functorial lift.
\end{proof}

\begin{remark}
In the proof of Theorem \ref{F:local2}, we prove the equality of $\gamma$-factors and the equality of $L$-functions in a more general setting. We have the equality of $\gamma$-factors and the equality of $L$-functions in Theorem \ref{F:local2} when $\rho$ is a generic discrete series representations of $GL_m$.
\end{remark}

\begin{remark}
Since the global functorial lift $\underline{\Pi}$ in Theorem \ref{F:global} is either cuspidal or a full induced representation from cuspidals, $\underline{\Pi}$ is generic. Thus all of its local components are as well.
\end{remark}

\begin{definition}\label{F:local}
Let $\pi$ and $\Pi$ be as in Theorem \ref{F:local2}. We shall call this $\Pi$ the local functorial lift of $\pi$.
\end{definition}

\subsection{local transfer image and Langlands parameters: supercuspidal case}
\label{sc:LP}
$ $

We describe the image of the local transfer lift of the irreducible generic supercuspidal representations and construct its Langlands parameters. We first discuss the properties of poles of $L$-functions. The following lemma is a corollary of \cite[Proposition 7.3]{S2}.

\begin{lemma}\label{F:main:lemma}
Let $\pi$ denote an irreducible unitary generic supercuspidal representation of $G_n$ and $\rho$ an irreducible unitary generic supercuspidal representation of $GL_m$. Let,  for $i=1,2$, $L(s, \rho \otimes \pi, r_i)$  be the $L$-functions for $GSpin$ groups from the Langlands-Shahidi method $($see Section \ref{Shahidi}$)$. Then,

(a) $L(s, \rho \otimes \pi, r_i)$ has a pole at $s=0$ if and only if $L(s, \widetilde{\rho \otimes \pi}, r_i)$ has a pole at $s=0$.

(b) $L(s, \rho \otimes \pi, r_i) \sim L(-s, \widetilde{\rho \otimes \pi}, r_i)$. Here $\sim$ means that two $L$-functions are equal up to monomial in $q^{-s}$.

(c) $L(s, \rho \otimes \pi, r_i)$ can have poles only for $Re(s)=0$.

\end{lemma}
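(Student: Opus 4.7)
The plan is to deduce all three assertions from Shahidi's Proposition 7.3 of \cite{S2}, which records the structural properties of the local Langlands-Shahidi $L$-function attached to a generic unitary supercuspidal datum, combined with the tempered $L$-function theorem \cite[Theorem 5.7]{A}. Because $\rho\otimes\pi$ is supercuspidal, and hence tempered, Definition \ref{Shahidi's L-function} identifies $L(s,\rho\otimes\pi,r_i)^{-1}$ with the normalized numerator of the Laurent polynomial $\gamma(s,\rho\otimes\pi,r_i,\psi_F)\in\mathbb{C}(q^{-s})$, whose numerator and denominator are coprime, so pole questions reduce to questions about the zeros of that numerator.

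I would establish (b) first. The key input I would extract from Proposition 7.3 of \cite{S2} is that $L(s,\rho\otimes\pi,r_i)$ factors as $\prod_{k}(1-\alpha_k q^{-s})^{-1}$ with each $|\alpha_k|=1$, a factorization that encodes the unitarity of $\rho\otimes\pi$. Passing to the contragredient replaces each $\alpha_k$ by $\bar\alpha_k=\alpha_k^{-1}$, and pulling out a factor of $-\alpha_k q^{-s}$ from each term in $\prod_k(1-\alpha_k^{-1}q^{s})^{-1}$ yields
\[
L(-s,\widetilde{\rho\otimes\pi},r_i) \;\sim\; L(s,\rho\otimes\pi,r_i)
\]
up to a monomial in $q^{-s}$, which is exactly (b).

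Part (a) then follows immediately from (b): a nonzero monomial in $q^{-s}$ is holomorphic and non-vanishing at $s=0$, and $s\mapsto -s$ fixes $0$, so $L(s,\rho\otimes\pi,r_i)$ has a pole at $s=0$ if and only if $L(-s,\widetilde{\rho\otimes\pi},r_i)$ does, hence if and only if $L(s,\widetilde{\rho\otimes\pi},r_i)$ does.

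For (c), I would apply the tempered $L$-function theorem \cite[Theorem 5.7]{A}, which gives holomorphy of $L(s,\rho\otimes\pi,r_i)$ on $\operatorname{Re}(s)>0$; applying the same to the contragredient and then invoking (b) transports this to holomorphy on $\operatorname{Re}(s)<0$, so any pole must lie on $\operatorname{Re}(s)=0$. The main obstacle is the factorization of $L(s,\rho\otimes\pi,r_i)$ with unit-modulus parameters used in (b), which is the content of Shahidi's Proposition 7.3 and encodes the unitarity/temperedness of $\rho\otimes\pi$; once this shape is in hand, all three statements are formal.
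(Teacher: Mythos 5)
Your proof is correct and follows essentially the same route as the paper: both rest on the factorization $L(s,\rho\otimes\pi,r_i)=\prod_j(1-\alpha_j q^{-s})^{-1}$ with $|\alpha_j|=1$ from \cite[Proposition 7.3]{S2}, together with the effect of passing to the contragredient. The one step you assert without justification --- that the contragredient replaces each $\alpha_j$ by $\overline{\alpha_j}$ --- is not contained in Proposition 7.3; the paper obtains it from the conjugation relation $\overline{\gamma(s,\rho\otimes\pi,r_i,\psi_F)}=\gamma(\overline{s},\widetilde{\rho\otimes\pi},r_i,\overline{\psi_F})$ of \cite[Lemma 5.3.2]{S7}, which you should cite (your detour through the tempered $L$-function theorem for (c) is valid but unnecessary once the unit-modulus factorization is in hand).
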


\begin{proof}
From Lemma 5.3.2 in \cite{S7}, we have
$$\overline{\gamma(s,\rho \otimes \pi, r_i, \psi_F)}=\gamma(\overline{s}, \widetilde{\rho \otimes \pi}, r_i,\overline{\psi_F}).$$
Since $\pi$ and $\rho$ are tempered, $\overline{L(s,\rho \otimes \pi, r_i)}=L(\overline{s}, \widetilde{\rho \otimes \pi}, r_i)$.
Therefore, $L(s,\rho \otimes \pi, r_i)$ has a pole at $s=0$ if and only if $L(s, \widetilde{\rho \otimes \pi}, r_i)$ has a pole at $s=0$. This proves (a). From Proposition 7.3 in \cite{S2}, $L(s, \rho \otimes \pi, r_i)$ is of the form $\displaystyle\prod\limits_{j} (1- \alpha_j q^{-s})^{-1}$ with $|\alpha_j|=1$. On the other hand, using $\overline{L(s,\rho \otimes \pi, r_i)}=L(\overline{s}, \widetilde{\rho \otimes \pi}, r_i)$ above, we have
$$L(-s, \widetilde{\rho \otimes \pi}, r_i) = \overline{L(-\overline{s}, \rho \otimes \pi, r_i)}= \displaystyle\prod\limits_{j} (1- \overline{\alpha_j} q^{s})^{-1} = \displaystyle\prod\limits_{j} (-\overline{\alpha_j} q^{s})^{-1}(1- \alpha_j q^{-s})^{-1}.$$ Therefore,  $L(-s, \widetilde{\rho \otimes \pi}, r_i)= ( \displaystyle\prod\limits_{j} (-\alpha_j q^{-s}) )L(s, \rho \otimes \pi, r_i)$. This proves (b). (c) is a direct consequence of Proposition 7.3 in \cite{S2} since $L(s, \rho \otimes \pi, r_i)$ is of the form $\displaystyle\prod\limits_{j} (1- \alpha_j q^{-s})^{-1}$ with $|\alpha_j|=1$.
\end{proof}

\begin{theorem}\label{F:main}
Let $\pi$ be an irreducible unitary generic supercuspidal representation of the group $GSpin_{2n+1}$ $($resp. $GSpin_{2n})$ and let $\Pi$ be its local functorial lift in the sense of Theorem \ref{F:local2}. Then $\Pi$ is of the form
$$\Pi \simeq \Pi_{1} \times \cdots \times \Pi_{d}$$
where each $\Pi_{i}$ is an irreducible unitary supercuspidal representation of some $GL_{2n_i}$ such that $L(s, \Pi_{i} \otimes w_{\pi}, \wedge^2 \otimes \mu^{-1})$ $($resp. $L(s, \Pi_{i} \otimes w_{\pi} , Sym^2 \otimes \mu^{-1}))$ has a pole at $s=0$, $\widetilde{\Pi}_{i} \otimes (\omega_{\pi} \circ \operatorname{det}) \cong \Pi_{i}$ and $\Pi_{i} \ncong \Pi_{j}$ for $i \neq j$.

\end{theorem}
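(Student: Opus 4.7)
The strategy is to globalize $\pi$ to a globally generic cuspidal representation $\underline{\pi}$ of $\underline{G}_n$ via \cite[Proposition 5.1]{S2}, apply Theorem \ref{F:global} to decompose the functorial lift $\underline{\Pi}\simeq\underline{\Pi}_1\boxplus\cdots\boxplus\underline{\Pi}_t$ into pairwise inequivalent unitary cuspidals $\underline{\Pi}_j$ of $\underline{GL}_{n_j}$ satisfying the twisted self-duality $\widetilde{\underline{\Pi}}_j\otimes\omega_{\underline{\pi}}\cong\underline{\Pi}_j$ and the pole condition on $L^T(s,\underline{\Pi}_j,\wedge^2\otimes\omega_{\underline{\pi}})$ (resp.\ $Sym^2\otimes\omega_{\underline{\pi}}$) at $s=1$, and then to descend these properties to $v_0$. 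Locally, $\Pi\simeq\underline{\Pi}_{1,v_0}\times\cdots\times\underline{\Pi}_{t,v_0}$; by Theorem \ref{F:local2}, $\Pi$ is tempered, so $\Pi\simeq\delta_1\times\cdots\times\delta_d$ for discrete series $\delta_k$.

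I would next show that each $\delta_k$ is supercuspidal. If not, some $\delta_k=\delta([\nu^{-(\ell-1)/2}\rho_0,\nu^{(\ell-1)/2}\rho_0])$ with $\ell\geq 2$ and $\rho_0$ a unitary supercuspidal of some $GL$. By the standard Jacquet--Piatetski-Shapiro--Shalika formula $L(s,\widetilde{\rho_0}\times\delta_k)=L(s+(\ell-1)/2,\widetilde{\rho_0}\times\rho_0)$ together with the multiplicativity of Rankin--Selberg $L$-functions (Example \ref{Multi:GL}(a)), the $L$-function $L(s,\widetilde{\rho_0}\times\Pi)$ would have a pole at the real point $s=-(\ell-1)/2\neq 0$. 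By Theorem \ref{F:local2} this pole would also occur in $L(s,\widetilde{\rho_0}\times\pi)$, contradicting Lemma \ref{F:main:lemma}(c). Relabeling $\Pi_i:=\delta_i$ yields the decomposition $\Pi\simeq\Pi_1\times\cdots\times\Pi_d$ with each $\Pi_i$ an irreducible unitary supercuspidal representation of some $GL_{2n_i}$.

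To derive the self-duality $\widetilde{\Pi}_i\otimes(\omega_\pi\circ\det)\cong\Pi_i$ and the pole of $L(s,\Pi_i\otimes\omega_\pi,\wedge^2\otimes\mu^{-1})$ (resp.\ $Sym^2\otimes\mu^{-1}$) at $s=0$, I would descend the global properties of each $\underline{\Pi}_j$ to $v_0$: the global twisted self-duality immediately gives $\widetilde{\underline{\Pi}}_{j,v_0}\otimes\omega_\pi\cong\underline{\Pi}_{j,v_0}$, and matching each supercuspidal $\Pi_i$ with a constituent of some $\underline{\Pi}_{j,v_0}$ together with the global pole at $s=1$ of the twisted exterior (resp.\ symmetric) square $L$-function forces each $\underline{\Pi}_{j,v_0}$ to be itself an irreducible supercuspidal carrying the individual twisted self-duality and the desired local pole at $s=0$, via the global functional equation (Theorem \ref{GFE}) and the preservation of local $L$-factors at archimedean and unramified places (Proposition \ref{F:S}). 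This is the hardest step: carefully transferring the global pole of the twisted exterior/symmetric square $L$-function at $s=1$ to a local pole at $s=0$ on each individual $\Pi_i$, while ruling out a "paired contragredient" structure among the local components of $\underline{\Pi}_{j,v_0}$, requires a joint analysis of the first and second Langlands--Shahidi $L$-functions combined with the global functional equation.

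For the distinctness of the $\Pi_i$, if $\Pi_i\cong\Pi_j$ for some $i\neq j$, multiplicativity (Example \ref{Multi:GL}(a)) gives that $L(s,\widetilde{\Pi}_i\times\Pi)=\prod_k L(s,\widetilde{\Pi}_i\times\Pi_k)$ has a pole at $s=0$ of order at least two, each index $k$ with $\Pi_k\cong\Pi_i$ contributing a simple pole; whereas the equal $L(s,\widetilde{\Pi}_i\times\pi)$ admits at most a simple pole at $s=0$ by Shahidi's analysis of reducibility points of $\widetilde{\Pi}_i\rtimes\pi$ for $\pi$ a generic supercuspidal, yielding a contradiction.
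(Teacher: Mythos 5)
Your reduction to temperedness, your proof that each discrete series constituent $\delta_k$ is supercuspidal, and your distinctness argument are all sound and essentially parallel to the paper's. In fact your supercuspidality step is a slightly cleaner variant: you twist by the supercuspidal support $\widetilde{\rho_0}$ of $\delta_k$ and invoke Lemma \ref{F:main:lemma}(c) directly, whereas the paper twists by $\widetilde{\delta_i}$ itself and compares the admissible pole line $Re(s)=\frac{1-t_i}{2}$ of the left-hand side with the actual pole at $s=1-t_i$ of the right-hand side. Both work.

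The genuine gap is in the third paragraph, which is where the actual content of the theorem beyond supercuspidality lives: the twisted self-duality $\widetilde{\Pi}_i\otimes(\omega_\pi\circ\operatorname{det})\cong\Pi_i$ and, above all, the pole at $s=0$ of $L(s,\Pi_i\otimes\omega_\pi,\wedge^2\otimes\mu^{-1})$ (resp. $Sym^2\otimes\mu^{-1}$) for \emph{each individual} $\Pi_i$. Your proposed global descent does not go through as described: a pole at $s=1$ of the \emph{partial} $L$-function $L^T(s,\underline{\Pi}_j,\wedge^2\otimes\omega_{\underline{\pi}})$ gives no direct control on the local factor at the single place $v_0$ (one would have to control the local twisted exterior/symmetric square factors at every other place and rule out cancellation), and moreover a single $\underline{\Pi}_{j,v_0}$ may a priori contain several of the supercuspidals $\Pi_i$, so even a successful descent would only give information about the product, not about each factor. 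You flag this yourself as the hardest step but supply no argument. The paper avoids globalization entirely here: from the simple pole of $L(s,\widetilde{\Pi}_i\times\pi)$ at $s=0$ it invokes Theorem 8.1 of \cite{S2} to get both the reducibility of $\nu^s\Pi_i\rtimes\pi$ at $s=1$ and the twisted self-duality; then Proposition 7.3 and Corollary 7.6 of \cite{S2} show that the product $L(s,\Pi_i\otimes\pi,r_1)L(2s,\Pi_i\otimes\pi,r_2)$ has only a simple pole at $s=0$, already exhausted by the $r_1$-factor, so $L(s,\Pi_i\otimes\pi,r_2)$ has no pole at $s=0$; finally the factorization $L(s,\Pi_i\times(\Pi_i\otimes\omega_\pi^{-1}))=L(s,\Pi_i\otimes\omega_\pi,\wedge^2\otimes\mu^{-1})L(s,\Pi_i\otimes\omega_\pi,Sym^2\otimes\mu^{-1})$, whose left-hand side has a simple pole at $s=0$ by the self-duality, forces the pole onto the correct factor. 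Without these local inputs (or a genuinely worked-out substitute), your proof of the two key conclusions of the theorem is incomplete.
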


\begin{proof}
We first consider the odd case, i.e., $GSpin_{2n+1}$. By Theorem \ref{F:local2} and its proof, we know that the local functorial lift $\Pi$ is tempered and of the form
$$ \Pi = \delta_{1} \times \cdots \times \delta_{d}$$
with each $\delta_{i}$ a discrete series representation. In Theorem \ref{F:local2}, we also prove that for any discrete series representation $\sigma$ of $GL_m$, $L(s, \sigma \times \pi) = L(s, \sigma \times \Pi)$. We apply this equality of twisted $L$-functions with $\sigma = \widetilde{\delta_{i}}$. Then, we have
\begin{equation}\label{Equation 4.2.1}
L(s, \widetilde{\delta_{i}} \times \pi)=L(s, \widetilde{\delta_{i}} \times \Pi)
\end{equation}

We now claim that each $\delta_{i}$ is in fact supercuspidal. We can realize $\delta_{i}$ as the unique irreducible subrepresentation $\delta([\nu^{-\frac{t_i-1}{2}}\rho_i, \nu^{\frac{t_i-1}{2}}$ $ \rho_i])$ of $\nu^{\frac{t_i-1}{2}} \rho_i \times \cdots \times \nu^{-\frac{t_i-1}{2}} \rho_i$ with $\rho_i$ a unitary supercuspidal representation of some $GL$ (\cite{ZB, Z}). Then, our claim becomes to show that $t_i=1$.

Let us first study the $L$-function on the left hand side of the equation (\ref{Equation 4.2.1}). By the multiplicativity of $\gamma$-factors, we have
$$\gamma(s, \widetilde{\delta_{i}} \times \pi, \psi_F) = \displaystyle\prod\limits_{k=0}^{t_i-1} \gamma(s-\frac{t_i-1}{2}+k, \widetilde{\rho_{i}} \times \pi, \psi_F).$$

Since $\pi$ and $\rho_{i}$ are irreducible unitary generic supercuspidal representations, Lemma \ref{F:main:lemma} (b) implies the following:
$$L(1-s, \widetilde{\rho_{i} \times \pi}) \sim L(s-1, \rho_{i} \times \pi)$$
where $\sim$ denotes that two $L$-functions  are equal up to a monomial in $q^{-s}$.

Therefore,
$$\gamma(s, \widetilde{\delta_{i}} \times \pi, \psi_F) \sim \frac{(\displaystyle\prod\limits_{k=0}^{t_i-1} L(s-\frac{t_i-1}{2}+k, \widetilde{\rho_{i}} \times \pi))^{-1}}{(\displaystyle\prod\limits_{k=0}^{t_i-1} L(s-\frac{t_i-1}{2}+k-1, \widetilde{\rho_{i}} \times \pi))^{-1}} =
\frac{(L(s+\frac{t_i-1}{2}, \widetilde{\rho_{i}} \times \pi))^{-1}}{(L(s-\frac{t_i-1}{2}-1, \widetilde{\rho_{i}} \times \pi))^{-1}}.$$

Since $\widetilde{\delta_{i}} \times \pi$ is tempered, $L(s, \widetilde{\delta_{i}} \times \pi)$ is the inverse of the normalized numerator of $\gamma(s, \widetilde{\delta_{i}} \times \pi, \psi_F)$. Therefore, we have

$$L(s, \widetilde{\delta_{i}} \times \pi)=L(s+\frac{t_i-1}{2}, \widetilde{\rho_{i}} \times \pi).$$

Since $\widetilde{\rho_{i}} \times \pi$ is an irreducible unitary generic supercuspidal representation, Lemma \ref{F:main:lemma} (c) implies that $L(s+\frac{t_i-1}{2}, \widetilde{\rho_{i}} \times \pi)$ can have poles only for $Re(s+\frac{t_i-1}{2})=0$. Therefore, $L(s, \widetilde{\delta_{i}} \times \pi)$ can only have poles on the line $Re(s)= \frac{1-t_i}{2}$.

Let us now consider the $L$-functions on the right hand side of the equation (\ref{Equation 4.2.1}). By multiplicativity of $L$-functions (\cite{JPS}, \cite{S2} or Example \ref{Multi:GL}) we have
$$L(s, \widetilde{\delta_{i}} \times \Pi) = \displaystyle\prod\limits_{j=1}^{d} L(s, \widetilde{\delta_{i}} \times \delta_{j}) \ \ \text{and} \ \ L(s, \widetilde{\delta_{i}} \times \delta_{i})= \displaystyle\prod\limits_{k=0}^{t_i-1} L(s+k, \widetilde{\rho_{i}} \times \rho_{i}).$$

Consequently, $L(s, \widetilde{\delta_{i}} \times \Pi)$ has a pole at $s=1-t_i$ since $L(s+t_i -1, \widetilde{\rho_{i}} \times  \rho_{i})$ has a pole at $s=1- t_i$. Therefore, Equation (\ref{Equation 4.2.1}) implies that $L(s, \widetilde{\delta_{i}} \times \pi)$ also has a pole at $s=1-t_i$. Since $L(s, \widetilde{\delta_{i}} \times \pi)$ can have only poles at $\frac{1-t_i}{2}$, $t_i=1$. Therefore, $\delta_{i}=\rho_{i}$ is supercuspidal and we can write
$$\Pi = \rho_{1} \times \cdots \times \rho_{d},$$
with each $\rho_{i}$ supercuspidal.
To see the other properties in the statement of the theorem, we consider the equality
$$L(s, \widetilde{\rho_{i}} \times \pi) = L(s, \widetilde{\rho_{i}} \times \Pi).$$
The right hand side has a pole at $s=0$ of order equal to the number of $j$ such that $\rho_{i} \simeq \rho_{j}$.
But the left hand side can have at most a simple pole at $s=0$, as $\widetilde{\rho_{i}} \times \pi$ is supercuspidal. Hence we see that $\rho_{i} \ncong \rho_{j}$ if $i \neq j$ and $L(s, \widetilde{\rho_{i}} \times \pi)$ has a simple pole at $s=0$.
Lemma \ref{F:main:lemma} (a) implies that $L(s, \rho_{i} \otimes \pi, r_1)$ has a simple pole at $s=0$ since $L(s, \widetilde{\rho_{i}} \otimes \widetilde{\pi}, r_1):=L(s, \widetilde{\rho_{i}} \times \pi)$ has a simple pole at $s=0$. Therefore, Theorem 8.1 of \cite{S2} implies that $\nu^s \rho_{i} \rtimes \pi$ is reducible at $s=1$ and $\widetilde{\rho_{i}} \otimes (\omega_{\pi} \circ \operatorname{det}) \cong \rho_{i}$.




We finally come to the $L$-function condition. Proposition 7.3 and Corollary 7.6 of \cite{S2} imply that the product
$$L(s, \rho_{i} \otimes \pi, r_1)L(2s, \rho_{i} \otimes \pi, r_2)$$
has a simple pole at $s=0$, where $r_2=Sym^2 \otimes \mu^{-1}$.
By previous analysis, this pole is already accounted for by $L(s, \rho_{i} \otimes \pi, r_1)$. We conclude that $L(s, \rho_{i} \otimes \pi, Sym^2 \otimes \mu^{-1})$ has no pole at $s=0$. On the other hand, from Lemma 3.14 and Remark 3.17 of \cite{Ki1} 
we know that
$$L(s, \rho_{i} \times (\rho_{i} \otimes w_{\pi}^{-1}))=  L(s, \rho_{i} \otimes w_{\pi}, \wedge^2 \otimes \mu^{-1})L(s, \rho_{i} \otimes w_{\pi}, Sym^2 \otimes \mu^{-1})$$
where $w_{\pi}$ is a central character of $\pi$.

Since $L(s, \rho_{i} \times (\rho_{i} \otimes w_{\pi}^{-1}))$ has a simple pole at $s=0$, we conclude that $L(s, \rho_{i} \otimes w_{\pi}, \wedge^2 \otimes \mu^{-1})$ has a pole at $s=0$.

The even case, i.e. $GSpin_{2n}$, is similar to the odd case. One has just to change the roles of $Sym^2$ and $\wedge^2$.
\end{proof}

The following theorem shows that the Langlands parameter of the local functorial lift of an irreducible generic supercuspidal representation $\pi$ of $G_n$ factors through the $L$-group of $\textbf{G}_n$.

\begin{theorem}\label{F:Langlands parameter}
Let $\pi$ and $\Pi$ be as in Theorem \ref{F:main} and let $\phi_{\Pi} : W_F' \rightarrow GL_{2n}(\mathbb{C})$ be the local Langlands parameter that is attached to $\Pi$ by the local Langlands correspondence for general linear groups (\cite{HT, H1}). Then $\phi_{\Pi}$ factors through the $L$-group of $\textbf{G}_n$.
\end{theorem}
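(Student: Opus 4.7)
The plan is to leverage the local Langlands correspondence for $GL$ to decompose $\phi_\Pi$ as a direct sum of irreducible pieces, then use the self-duality and pole conditions from Theorem \ref{F:main} to produce a $W_F'$-invariant bilinear form with similitude factor $\omega_\pi$, and finally match its type (symplectic versus orthogonal) to the shape of $^L\mathbf{G}_n$.

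First I would write $\Pi \simeq \Pi_1 \times \cdots \times \Pi_d$ with the $\Pi_i$ pairwise inequivalent unitary supercuspidals of $GL_{2n_i}$, as provided by Theorem \ref{F:main}; via \cite{HT, H1} this gives $\phi_\Pi = \phi_{\Pi_1} \oplus \cdots \oplus \phi_{\Pi_d}$ with each $\phi_{\Pi_i}$ an irreducible $2n_i$-dimensional representation of $W_F'$. The self-duality $\widetilde{\Pi}_i \otimes (\omega_\pi \circ \det) \simeq \Pi_i$ translates via the local Langlands correspondence into $\phi_{\Pi_i}^\vee \otimes \omega_\pi \simeq \phi_{\Pi_i}$, so by Schur's lemma there is a non-degenerate $W_F'$-equivariant bilinear form $B_i : V_i \times V_i \to \omega_\pi$ on the underlying space $V_i$ of $\phi_{\Pi_i}$, unique up to scalar and either symmetric or alternating.

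To pin down the type, I would invoke Henniart's theorem \cite{H2} identifying the Shahidi twisted symmetric and exterior square $L$-functions with the Artin $L$-functions of $Sym^2 \phi_{\Pi_i}$ and $\wedge^2 \phi_{\Pi_i}$ twisted appropriately by $\omega_\pi^{-1}$. In the $GSpin_{2n+1}$ case, the pole of $L(s, \Pi_i \otimes \omega_\pi, \wedge^2 \otimes \mu^{-1})$ at $s=0$ supplied by Theorem \ref{F:main} becomes the assertion $\omega_\pi \subset \wedge^2 \phi_{\Pi_i}$, forcing $B_i$ to be alternating; dually, in the $GSpin_{2n}$ case, $B_i$ is symmetric. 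Since the $\phi_{\Pi_i}$ are pairwise inequivalent, the block-diagonal form $B := \bigoplus_i B_i$ is a non-degenerate $W_F'$-equivariant bilinear form on $V := \bigoplus_i V_i$ with similitude $\omega_\pi$, and $\phi_\Pi$ preserves $B$ up to this scalar.

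In the odd case this immediately places $\phi_\Pi$ inside $GSp_{2n}(\mathbb{C}) = {}^L\mathbf{G}_n$, finishing the proof. In the even case $\phi_\Pi$ lands \emph{a priori} only in $GO_{2n}(\mathbb{C})$, and the main obstacle will be to sharpen this to the connected component $GSO_{2n}(\mathbb{C}) = {}^L\mathbf{G}_n$, which is cut out inside $GO_{2n}$ by the condition $\det = \lambda^n$ where $\lambda$ is the similitude character. Via LLC for $GL_{2n}$, this amounts to showing $\omega_\Pi = \omega_\pi^n$; I would obtain this by globalizing as in the proof of Theorem \ref{F:local2}, embedding $\pi$ into a globally generic cuspidal $\underline{\pi}$ whose functorial transfer $\underline{\Pi}$ restricts at $v_0$ to $\Pi$, and specializing the global central character relation $\omega_{\underline{\Pi}} = \omega_{\underline{\pi}}^n$ from Theorem \ref{F:global} at the place $v_0$.
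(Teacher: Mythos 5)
Your argument is correct, and for the odd case it is essentially the paper's proof: decompose $\phi_\Pi=\bigoplus_i\phi_{\Pi_i}$ via the LLC for $GL$, use Henniart's identification of the twisted exterior/symmetric square $L$-functions with Artin $L$-functions so that the pole at $s=0$ from Theorem \ref{F:main} says the relevant twisted square contains the trivial representation, and conclude that each block preserves a form of the right type with similitude $\omega_\pi$. Your bilinear-form formulation (Schur's lemma giving a unique-up-to-scalar $B_i$ with similitude $\omega_\pi$, whose parity is pinned down by whether $\omega_\pi$ sits in $\wedge^2\phi_{\Pi_i}$ or $\mathrm{Sym}^2\phi_{\Pi_i}$) just makes explicit the step the paper states as ``contains a trivial representation, therefore $\phi_i$ factors through $GSp_{2n_i}(\mathbb{C})$.''

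Where you genuinely go beyond the paper is the even case. The paper disposes of it with ``similar, exchange $Sym^2$ and $\wedge^2$,'' but as you correctly observe, the symmetric pole condition only places $\phi_\Pi$ in $GO_{2n}(\mathbb{C})$, whereas ${}^L\mathbf{G}_n=GSO_{2n}(\mathbb{C})$ is the subgroup cut out by $\det=\lambda^n$; each block contributes a quadratic character $\epsilon_i=\det\phi_{\Pi_i}\cdot\omega_\pi^{-n_i}$ and one must show $\prod_i\epsilon_i=1$. Your fix --- reading off $\omega_{\underline{\Pi}}=\omega_{\underline{\pi}}^{\,n}$ from Theorem \ref{F:global}, specializing at $v_0$ (legitimate since the $\Pi$ of Theorem \ref{F:local2} is precisely $\Pi_{v_0}$, by construction and by the uniqueness from the local converse theorem), and translating $\omega_\Pi=\omega_\pi^n$ into $\det\phi_\Pi=\phi_{\omega_\pi}^n$ via the LLC --- is sound and supplies a step the paper leaves implicit. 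Note that this only controls the product of the $\epsilon_i$, so individual blocks need not land in $GSO_{2n_i}(\mathbb{C})$; but that is all the statement requires.
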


\begin{proof}
We first consider the odd case, i.e. $GSpin_{2n+1}$ groups.  By Theorem \ref{F:main}, $\Pi \simeq \Pi_{1} \times \cdots \times \Pi_{d}$ where each $\Pi_{i}$ is an irreducible unitary supercuspidal representation of some $GL_{2n_i}$ such that $L(s, \Pi_{i} \otimes w_{\pi}, \wedge^2 \otimes \mu^{-1})$ has a pole at $s=0$, $\widetilde{\Pi}_{i} \otimes (\omega_{\pi} \circ \operatorname{det}) \cong \Pi_{i}$ and $\Pi_{i} \ncong \Pi_{j}$ for $i \neq j$. Let $\phi_{i}=\phi_{\Pi_{i}}: W_{F} \rightarrow  \,^{L}GL_{2n_i}$ be the local Langlands parameter attached to $\Pi_{i}$ through the local Langlands correspondence for general linear groups (\cite{HT, H1}). Then $\Pi$ has the local Langlands parameter $\phi_{\Pi}= \phi_{1} \oplus \phi_{2} \oplus \cdots \oplus \phi_{d}$. Henniart proved that the twisted exterior square $L$-functions are Artin $L$-functions (\cite{H2}). Therefore, the Artin $L$-function $L(s, (\wedge^2 \otimes \mu^{-1}) \circ (\phi_{i} \otimes \phi_{w_{\pi}}) )= L(s, \Pi_{i} \otimes w_{\pi}, \wedge^2 \otimes \mu^{-1})$ also has a pole at $s=0$. This implies that $(\wedge^2 \otimes \mu^{-1}) \circ (\phi_{i} \otimes \phi_{w_{\pi}})$ contains a trivial representation. Therefore, each $\phi_{i}$ factors through $GSp_{2n_i}(\mathbb{C})$ for all $i$. Then, $\phi_{\Pi}$ also factors through $GSp_{2n}(\mathbb{C})$.
The even case, i.e. $GSpin_{2n}$ is similar to the odd case. One has just to change the roles of $Sym^2$ and $\wedge^2$.
\end{proof}

\subsection{The equality of $L$-functions and Langlands parameters: general case}
\label{H}
$\text{ } \text{ } \text{ } $
Denote, for an irreducible admissible generic representation $\rho $ of $GL_k$, by $\phi _{\rho }$ the admissible
homomorphism $W_F\rightarrow GL_k(\Bbb C)$  attached to $\rho $  by the local Langlands correspondence.

The following theorem follows from a special case of \cite[Theorem 3.2]{He2} with help of the local Langlands correspondence for $GL_n$ \cite{HT}:

\begin{theorem}\label{F:reduction} Let $\pi $ be an irreducible admissible generic representation of  $G_n$ and
$\sigma $ be an irreducible admissible generic representation of $GL_m$. Fix a standard Levi subgroup $\textbf{M}= \textbf{GL}_{k_1} \times \cdots \times \textbf{GL}_{k_s} \times \textbf{G}_k$ of $\textbf{G}_n$ and
an irreducible generic supercuspidal representation $\pi_{sc}=\rho_1\otimes\cdots\otimes\rho_r \otimes \tau$
of $M$, such that $\pi$ is a sub-representation of $i_P^{\textbf{G}_n}\pi_{sc}$.

Suppose that there is an admissible homomorphism $\phi _{\tau }:W_F\rightarrow\ ^L\textbf{G}_k$ such that, for every
irreducible supercuspidal representation $\rho $ of every general linear group $GL_l$, $l\geq 1$, one has the equalities
of local Rankin $\gamma $-factors
$$\gamma(s ,\rho \times \tau ,\psi _F)=\gamma (s ,\phi _{\rho } \otimes \phi_{\tau },\psi _F)$$
and the equality of local symmetric square (resp. exterior square) $L$-functions
$$L(s, \rho\times\tau, \wedge^2)=L(s, \phi _{\rho }\otimes\wedge^2(\phi_{\tau })),\qquad \hbox{\it if $k$ is odd,}$$
$$(\hbox{\it resp.}\qquad L(s, \rho\times\tau, Sym ^2)=L(s, Sym ^2(\phi _{\rho }\otimes\phi_{\tau }))\qquad \hbox{\it if $k$ is even.})$$

Then the assumptions of
\cite[Theorem 6.3]{He1} are satisfied which attach to $\pi $ an admissible homomorphism $$\phi _{\pi }:W_F'\rightarrow\ ^L\textbf{G}_n.$$
Moreover, one has
$$\gamma (s ,\sigma\times\pi ,\psi _F)=\gamma (s ,\phi _{\sigma }\otimes\phi_{\pi },\psi _F)$$
and in particular,
$$L(s ,\sigma\times\pi )=L(s ,\phi _{\sigma }\otimes\phi_{\pi }).$$
\end{theorem}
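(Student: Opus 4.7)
The plan is to verify the hypotheses of \cite[Theorem 6.3]{He1} using the information available on the supercuspidal support of $\pi$, apply that theorem to obtain $\phi_\pi$, and then deduce the stated equality of $\gamma$-factors (and hence of $L$-functions) for a general generic $\sigma$ by multiplicativity. The first task is to produce, for each irreducible generic supercuspidal constituent of $\pi_{sc}=\rho_1\otimes\cdots\otimes\rho_r\otimes\tau$, a Langlands parameter satisfying the compatibility conditions required in loc.\ cit.\ (equality of twisted Rankin $\gamma$-factors and of twisted symmetric/exterior square $L$-factors). For the $GL_{k_i}$-factors $\rho_i$ this is provided by the local Langlands correspondence \cite{HT, H1}, together with Henniart's equality of the Langlands--Shahidi and Artin twisted symmetric and exterior square $L$-functions \cite{H2}. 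For the $\textbf{G}_k$-factor $\tau$, the existence of $\phi_\tau$ and the required compatibilities are precisely the hypothesis of the theorem.

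Having verified these inputs, I would apply \cite[Theorem 6.3]{He1} directly to obtain an admissible homomorphism $\phi_\pi:W_F'\to{}^L\textbf{G}_n$ together with the $\gamma$-factor identity
$$\gamma(s,\rho\times\pi,\psi_F)=\gamma(s,\phi_\rho\otimes\phi_\pi,\psi_F)$$
for every irreducible supercuspidal representation $\rho$ of any $GL_l$. To pass from supercuspidal $\rho$ to an arbitrary irreducible generic $\sigma$ of $GL_m$, I would combine the multiplicativity of $\gamma$-factors of Theorem \ref{Multiplicativity} (as illustrated in Example \ref{Multi:GSpin}) on the automorphic side with the additivity under direct sums of Artin $\gamma$-factors on the Galois side: writing $\sigma$ as a subquotient of an induced representation from supercuspidal data yields a factorization of $\gamma(s,\sigma\times\pi,\psi_F)$ into a product of supercuspidal $\gamma$-factors, and the local Langlands correspondence for $GL_m$ gives the matching factorization $\phi_\sigma=\bigoplus_j\phi_{\rho^{(j)}}$; applying the already-established supercuspidal equality term by term then produces the desired identity for $\sigma$.

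The $L$-function equality follows by the same reduction. When $\sigma\times\pi$ is tempered, Definition \ref{Shahidi's L-function} defines $L(s,\sigma\times\pi)$ as the inverse of the normalized numerator of the $\gamma$-factor, and the Artin $L$-function is obtained from $\gamma(s,\phi_\sigma\otimes\phi_\pi,\psi_F)$ in the same way, so the $\gamma$-factor equality immediately yields the $L$-function equality; the non-tempered case is reduced to the tempered one via the Langlands classification, which is built into Definition \ref{Shahidi's L-function} and into the Artin $L$-function in a compatible way. The main obstacle is the verification of the input hypotheses of \cite[Theorem 6.3]{He1}: these involve rather delicate compatibility conditions at the reducibility points of parabolically induced representations (detected by poles of twisted symmetric/exterior square $L$-functions via Harish-Chandra's $\mu$-function), and the hypothesis of the present theorem has been formulated precisely to match them. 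Once this matching is checked, Heiermann's construction together with the multiplicativity arguments outlined above does the rest.
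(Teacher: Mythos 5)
Your proposal is correct and follows essentially the same route as the paper: the authors likewise verify Heiermann's hypotheses by combining the local Langlands correspondence for $GL_{n_1}\times GL_{n_2}$ (Rankin--Selberg factors) with the assumed compatibilities for $\tau$, invoke Heiermann's construction (they cite \cite[Theorem 2.3]{He2}, the companion of \cite[Theorem 6.3]{He1}) to obtain $\phi_\pi$, and then deduce the $\gamma$- and $L$-function identities by multiplicativity, using exactly the observation you make that in the $GSpin$ product formula only Rankin $\gamma$-factors appear (Example \ref{Multi:GSpin}).
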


\begin{remark} One could add the equality of the symmetric/exterior square L-functions to the conclusions of the theorem, but we do not need it here. \end{remark}

\begin{proof} Remark first that the equalities for local Rankin $\gamma $-factors and local Rankin $L$-functions associated to supercuspidal representations of $GL_{n_1} \times GL_{n_2} $, $n_1 ,n_2$ any integer $\geq 1$, follow from the local Langlands correspondence \cite{HT}. By this and the equality of the other $L$-functions, the assumptions of  \cite[Theorem 2.3]{He2} are satisfied, which implies that the Langlands parameter $\phi_{\pi }:W_F'\rightarrow\ ^L\textbf{G}_n$ is well defined. The equality of the local Rankin $\gamma $-factors can be shown as in the proof of \cite[Theorem 3.2]{He2}, because in the product formula for the local Rankin  $\gamma $-factors only Rankin  $\gamma $-factors do appear (Example \ref{Multi:GSpin}).
\end{proof}

\begin {theorem}
\label{main}
 For $\pi $ an irreducible admissible generic representation of  $G_n$ and $\sigma $ an irreducible admissible generic representation of $GL_m$, one has the equality of local
$\gamma $-factors
$$\gamma (s ,\sigma\times\pi ,\psi _F)=\gamma (s ,\phi _{\sigma }\otimes\phi_{\pi },\psi _F)$$
and the equality of local $L$-functions
$$L(s ,\sigma\times\pi )=L(s ,\phi _{\sigma }\otimes\phi_{\pi }).$$

\end {theorem}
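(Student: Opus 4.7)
The plan is to reduce Theorem \ref{main} to Theorem \ref{F:reduction} by exhibiting, for each generic $\pi$, an appropriate supercuspidal support whose $G$-factor $\tau$ satisfies the two hypotheses of that theorem. By Jacquet's subrepresentation theorem, there is a standard parabolic $\textbf{P}=\textbf{M}\textbf{N}$ of $\textbf{G}_n$ with Levi $\textbf{M} = \textbf{GL}_{k_1} \times \cdots \times \textbf{GL}_{k_s} \times \textbf{G}_k$ and an irreducible generic supercuspidal $\pi_{sc} = \rho_1 \otimes \cdots \otimes \rho_r \otimes \tau$ of $M$ such that $\pi \hookrightarrow i_P^{G_n}(\pi_{sc})$. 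I would take $\phi_\tau$ to be the Langlands parameter attached to $\tau$ in Theorem \ref{F:Langlands parameter}, namely the parameter of the local functorial lift $\Pi_\tau$ from Theorem \ref{F:local2}, viewed as landing in ${}^{L}\textbf{G}_k$ via the natural embedding ${}^{L}\textbf{G}_k \hookrightarrow GL_{2k}(\mathbb{C})$.

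Next, I would verify the two hypotheses of Theorem \ref{F:reduction}. For the Rankin $\gamma$-factor equality: for any irreducible supercuspidal $\rho$ of any $GL_l$, Theorem \ref{F:local2} gives $\gamma(s,\rho\times\tau,\psi_F) = \gamma(s,\rho\times\Pi_\tau,\psi_F)$, while the local Langlands correspondence for $GL$ \cite{HT, H1} identifies the right-hand side with $\gamma(s,\phi_\rho\otimes\phi_{\Pi_\tau},\psi_F) = \gamma(s,\phi_\rho\otimes\phi_\tau,\psi_F)$. For the symmetric or exterior square $L$-function equality: because the representation $r_2$ of ${}^{L}\textbf{M}$ involves only the standard representation of $GL_m(\mathbb{C})$ (through $\textrm{Sym}^2$ or $\wedge^2$) tensored with the inverse similitude character of ${}^{L}\textbf{G}_k$, the Langlands-Shahidi $L$-function $L(s,\rho\otimes\tau,r_2)$ reduces to a twisted symmetric or exterior square $L$-function for $\rho$ alone, the twist being derived from the central character of $\tau$ (as in the identity displayed in the proof of Theorem \ref{F:main}). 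Henniart's theorem \cite{H2} identifies this with the corresponding Artin $L$-function, which via local class field theory matches $L(s, r_2(\phi_\rho \otimes \phi_\tau))$ since $w_\tau$ corresponds to $\mu \circ \phi_\tau$.

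With both hypotheses verified, Theorem \ref{F:reduction} produces the Langlands parameter $\phi_\pi : W_F' \to {}^{L}\textbf{G}_n$ and yields the equality $\gamma(s,\sigma\times\pi,\psi_F) = \gamma(s,\phi_\sigma\otimes\phi_\pi,\psi_F)$, along with $L(s,\sigma\times\pi) = L(s,\phi_\sigma\otimes\phi_\pi)$, for every irreducible admissible generic representation $\sigma$ of $GL_m$. This completes the proof.

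The main obstacle is the careful verification of hypothesis (2) of Theorem \ref{F:reduction}: one must unravel the action of $r_2$ as a representation of ${}^{L}\textbf{M}$, match the Langlands-Shahidi $L$-function on the $GSpin$ side with an appropriately twisted square $L$-function on $GL_m$, and apply Henniart's theorem with the correct central-character twist. The Rankin $\gamma$-factor hypothesis, by contrast, is essentially immediate once Theorem \ref{F:local2} and the identification of $\phi_\tau$ with $\phi_{\Pi_\tau}$ from Theorem \ref{F:Langlands parameter} are in hand.
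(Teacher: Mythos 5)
Your proposal is correct and follows essentially the same route as the paper: both reduce to Theorem \ref{F:reduction} by taking $\phi_\tau$ to be the parameter of the lift $\Pi(\tau)$ from Theorem \ref{F:local2} (which factors through ${}^{L}\textbf{G}_k$ by Theorem \ref{F:Langlands parameter}), verifying the Rankin $\gamma$-factor hypothesis via Theorem \ref{F:local2} and the exterior/symmetric square hypothesis via Henniart's theorem. Your extra unpacking of the $r_2$-condition is more explicit than the paper's one-line citation of Henniart, but the argument is the same.
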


\begin{proof}
We apply theorem \ref{F:reduction} with the notations therein: it follows from theorem \ref{F:Langlands parameter} that the local Langlands parameter of
the lift $\Pi(\tau )$ of $\tau $ to $GL_{2k}$ defined in theorem \ref{F:local2} factors through the $L$-group of $\textbf{G}_n$. Also by theorem \ref{F:local2}, equality of the local Rankin $\gamma $-factors holds. It has been proved by G. Henniart \cite[Theorem 1.4]{H2} that one has equality for the exterior square (resp. symmetric square) local $L$-functions. Consequently, theorem \ref{F:reduction} can be applied and implies the equality for local Rankin $\gamma -$factors and $L$-functions as stated. \end{proof}

\begin{remark}
One can approach theorem \ref{F:reduction}  also by using the classification of strongly positive representations of $GSpin$ groups \cite{Kim0, Kim, Kim1, Kim2}. The method of the present paper uses Langlands parameters, which are not considered in those papers. Although no specific classification of discrete series representations is apparent in the present paper, this is hidden in the use of the main result of \cite{He3} on which \cite{He1, He2} are based.  \end{remark}

\begin{remark}
In the case of classical groups, the equality of local factors are first proved in \cite{CKSS2}. Our results can be used in the case of classical groups in a completely analogous manner to construct Langlands parameter. This gives a new and simple proof of the equality of $L$-functions in this case.
\end{remark}

The following is the generic Arthur packet conjecture in the case of split $GSpin$ groups (Theorem B in Section \ref{Intro}):
\begin{corollary}
 Let $\psi$ be a local Arthur parameter for split $GSpin$ groups. Let $\Pi(\phi_{\psi})$ be the $L$-packet attached to the Langlands parameter $\phi_{\psi}$ which corresponds to the Arthur parameter $\psi$. Suppose that $\Pi(\phi_{\psi})$ has a generic member. Then it is a tempered $L$-packet.
\end{corollary}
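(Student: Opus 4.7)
The plan is to deduce the corollary from Shahidi's criterion \cite[Theorem 5.1]{S4}, which asserts that the generic Arthur packet conjecture follows once one has, for the group at hand, a construction of Langlands parameters attached to generic irreducible admissible representations together with the equality of Langlands--Shahidi $L$-functions and Artin $L$-functions through this correspondence. Both ingredients are by now available in our setting: the first from Theorem \ref{F:reduction} and the second from Theorem \ref{main}, so the corollary should be formal once Shahidi's implication is unwound in the $GSpin$ context.

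Concretely, given a generic member $\pi$ of $\Pi(\phi_\psi)$, its Langlands parameter (via Theorem \ref{F:reduction}) equals $\phi_\psi$, and Theorem \ref{main} yields $L(s,\sigma\times\pi)=L(s,\phi_\sigma\otimes\phi_\psi)$ for every generic irreducible admissible $\sigma$ of $GL_m$. The tempered $L$-function conjecture (cf.\ \cite[Theorem 5.7]{A}, already invoked inside the proof of Theorem \ref{F:local2}) forces $L(s,\sigma\times\pi)$ to be holomorphic in $\mathrm{Re}(s)>0$ whenever $\pi$ is tempered. I would then run Shahidi's argument in the following form: if the auxiliary $SL_2(\mathbb{C})$-factor of $\psi$ were non-trivial, the defining formula
\[
\phi_\psi(w)=\psi\!\left(w,\begin{pmatrix}|w|^{1/2} & 0 \\ 0 & |w|^{-1/2}\end{pmatrix}\right)
\]
would force $\phi_\psi$ to contain summands twisted by strictly positive powers of $|\cdot|$. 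Choosing $\sigma$ so that $\phi_\sigma$ is contragredient to one of these shifted summands produces a pole of $L(s,\phi_\sigma\otimes\phi_\psi)$ in $\mathrm{Re}(s)>0$, hence, by Theorem \ref{main}, a pole of $L(s,\sigma\times\pi)$ in the same region. This contradicts the holomorphy supplied by the tempered $L$-function conjecture, so the $SL_2(\mathbb{C})$-factor of $\psi$ must be trivial. Consequently $\phi_\psi$ coincides with $\psi|_{W_F'\times\{1\}}$, whose image is bounded, so $\phi_\psi$ is a tempered Langlands parameter and every element of $\Pi(\phi_\psi)$ is tempered.

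The main obstacle is not mathematical but bookkeeping: one must check that Shahidi's implication \cite[Theorem 5.1]{S4}, originally formulated for classical groups, transfers verbatim to split $GSpin$. This boils down to verifying that every ingredient used in that argument---namely the equality of $L$-functions with Artin $L$-functions, the Langlands classification, and the tempered $L$-function conjecture for generic representations---has a $GSpin$ analogue at our disposal, which is exactly what Theorems \ref{F:reduction} and \ref{main} (combined with \cite[Theorem 5.7]{A}) provide. Once this transfer is made explicit the corollary is immediate.
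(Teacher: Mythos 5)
Your proposal is correct and follows exactly the paper's route: the corollary is deduced by combining the equality of $L$-functions in Theorem \ref{main} with Shahidi's criterion \cite[Theorem 5.1]{S4} (which, as formulated in \cite{S4}, already applies to any quasi-split connected reductive group, so no transfer from the classical-group case is needed). One small caution about your unwinding of Shahidi's argument: the holomorphy of $L(s,\sigma\times\pi)$ for $\mathrm{Re}(s)>0$ cannot be taken from the tempered $L$-function conjecture ``whenever $\pi$ is tempered,'' since temperedness of $\pi$ is the conclusion; Shahidi's proof instead uses the holomorphy properties that $L(s,\sigma\times\pi)$ enjoys because $\pi$ is \emph{generic}, but since you are only citing \cite[Theorem 5.1]{S4} rather than reproving it, this does not affect the validity of your argument.
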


\begin{proof}
This follows from Theorem \ref{main} by a result of Shahidi (\cite[Theorem 5.1]{S4} who proved that, if the equality of $L$-functions through the local Langlands correspondence holds, then the generic Arthur packet conjecture is also true.
\end{proof}

\section{Erratum}
\label{Erratum}
The first author wants to use the opportunity to correct the statement of theorem 4.6 in \cite{He1} (see also proposition 4.9 of the same article), where it is said that in an $L^2$-pair $(s,N)$ in a complex connected reductive group $\mathcal G$ the nilpotent element $N$ is determined by $s$ up to a nonzero constant. Of course, if one denotes $N=\sum _{\alpha }N_{\alpha }$ the decomposition of $N$ relative to the decomposition of the Lie algebra of $\mathcal G$ into root spaces for the action of a maximal torus $\mathcal T$, $s\in \mathcal T$, only the roots $\alpha $ with $N_{\alpha }\ne 0$ are determined by $s$. (It is immediate by the definition of an $L^2$-pair that only these $N_{\alpha }$ can be nonzero, because $\alpha(s)$ must be equal to $q$. From proposition 5.8.5 in \cite{Ca} it follows that all these $N_{\alpha }$ must in fact be nonzero.)

This result is used in \cite{He1} in the proof of the proposition 4.10 which associates to an $L^2$-pair $(s,N)$ a morphism of algebraic groups $\phi :SL_2(\Bbb C)\rightarrow\mathcal G$ and says that this morphism is uniquely determined by $s$ upto equivalence. However, unicity upto equivalence is still true, because the roots $\alpha $ such that $N_{\alpha }\ne 0$ must be linearly independent by the above and, consequently, if $(s,N)$ and $(s,N_1)$ are two $L^2$-pairs, it is always possible to find an element $s_1$ in the torus $\mathcal T$, such that $Ad(s_1)N=N_1$. It follows that the morphism of algebraic groups $\phi _1:SL_2(\Bbb C)\rightarrow\mathcal G$ associated to  $(s,N_1)$ can be obtained from $\phi $ by conjugating by $s_1$, which means that both are equivalent.

The first author thanks P. Schneider to have remarked to him the mistake in the statement of theorem 4.6 in \cite{He1}

\section{Acknowledgment}

The second author wants to express his deepest gratitude to his advisor, F. Shahidi for his constant encouragement and help. The part of this paper (Supercuspidal case) is based on the second author's thesis (\cite{Kim0}) which F. Shahidi suggested as one of the projects. The second author would like to thank Sandeep Varma for carefully reading a paper to help us to improve the presentation of this paper.

\bibliographystyle{amsplain}

\end{document}